\documentclass[10pt,a4paper]{article}
\usepackage[T1]{fontenc}
\usepackage[a4paper,margin=2.35cm]{geometry}
\usepackage{microtype}
\usepackage{amsmath,amssymb,amsthm,mathtools}
\usepackage{xcolor}
\usepackage{enumitem}
\usepackage{url}
\usepackage{authblk}
\usepackage{titlesec}
\usepackage{aliascnt}
\usepackage[
  hyperfootnotes=false,
  colorlinks=true,
  linkcolor=blue!45!black,
  citecolor=blue!45!black,
  urlcolor=blue!45!black
]{hyperref}
\usepackage{bookmark}
\usepackage[nameinlink,capitalise,noabbrev]{cleveref}

\allowdisplaybreaks[1]
\setlist{itemsep=0.15em,topsep=0.25em,parsep=0pt}
\hypersetup{
  pdftitle={Sharp bounds for minimal dependencies of linear-form powers},
  pdfauthor={Heng Li and Xizhi Liu}
}

\numberwithin{equation}{section}

\theoremstyle{plain}
\newtheorem{theorem}{Theorem}[section]
\newaliascnt{lemma}{theorem}
\newtheorem{lemma}[lemma]{Lemma}
\aliascntresetthe{lemma}
\newaliascnt{proposition}{theorem}
\newtheorem{proposition}[proposition]{Proposition}
\aliascntresetthe{proposition}
\newaliascnt{corollary}{theorem}
\newtheorem{corollary}[corollary]{Corollary}
\aliascntresetthe{corollary}
\newaliascnt{conjecture}{theorem}

\aliascntresetthe{conjecture}
\newaliascnt{claim}{theorem}

\aliascntresetthe{claim}

\theoremstyle{definition}
\newaliascnt{definition}{theorem}
\newtheorem{definition}[definition]{Definition}
\aliascntresetthe{definition}
\newaliascnt{problem}{theorem}
\newtheorem{problem}[problem]{Problem}
\aliascntresetthe{problem}
\newaliascnt{example}{theorem}

\aliascntresetthe{example}

\theoremstyle{remark}
\newaliascnt{remark}{theorem}
\newtheorem{remark}[remark]{Remark}
\aliascntresetthe{remark}
\newaliascnt{observation}{theorem}

\aliascntresetthe{observation}
\newaliascnt{fact}{theorem}

\aliascntresetthe{fact}

\crefname{definition}{Definition}{Definitions}
\crefname{theorem}{Theorem}{Theorems}
\crefname{lemma}{Lemma}{Lemmas}
\crefname{proposition}{Proposition}{Propositions}
\crefname{corollary}{Corollary}{Corollaries}
\crefname{conjecture}{Conjecture}{Conjectures}
\crefname{claim}{Claim}{Claims}
\crefname{problem}{Problem}{Problems}
\crefname{remark}{Remark}{Remarks}
\crefname{observation}{Observation}{Observations}
\crefname{fact}{Fact}{Facts}
\crefname{example}{Example}{Examples}
\crefname{enumi}{part}{parts}

\newcommand{\K}{K}

\newcommand{\PP}{\mathbb P}
\newcommand{\one}{\mathbf{1}}
\newcommand{\St}{\operatorname{St}}

\newcommand{\Span}{\operatorname{span}}
\newcommand{\Sym}{\operatorname{Sym}}

\titleformat{\section}{\normalfont\large\bfseries}{\thesection}{0.75em}{}
\titleformat{\subsection}{\normalfont\normalsize\bfseries}{\thesubsection}{0.75em}{}
\titleformat{\subsubsection}{\normalfont\normalsize\itshape}{\thesubsubsection}{0.75em}{}
\titlespacing*{\section}{0pt}{0.95\baselineskip}{0.45\baselineskip}
\titlespacing*{\subsection}{0pt}{0.75\baselineskip}{0.3\baselineskip}
\titlespacing*{\subsubsection}{0pt}{0.6\baselineskip}{0.25\baselineskip}

\makeatletter
\let\oldthebibliography\thebibliography
\renewcommand{\thebibliography}[1]{%
  \oldthebibliography{#1}%
  \setlength{\itemsep}{0.15em}%
  \setlength{\parskip}{0pt}%
}
\newcommand{\plainfootnotetext}[1]{%
  \begingroup
  \renewcommand\thefootnote{}%
  \long\def\@makefntext##1{\noindent ##1}%
  \footnotetext{#1}%
  \addtocounter{footnote}{-1}%
  \endgroup
}
\makeatother

\begin{document}

\title{\bf\Large Sharp bounds for minimal dependencies of linear-form powers}
\date{\today}
\author[1,2]{Heng~Li}
\author[3]{Xizhi~Liu}
\affil[1]{School of Mathematics, Shandong University, Jinan, China}
\affil[2]{Extremal Combinatorics and Probability Group, Institute for Basic Science, Daejeon, South Korea}
\affil[3]{School of Mathematical Sciences, University of Science and Technology of China, Hefei, China}

\maketitle

\begin{abstract}
Motivated by the dimension-bound part of a problem of Bukh, we study Veronese circuits: how large can the span of $t$ linear forms be if their $m$-th powers are minimally linearly dependent?
We prove the sharp finite dimension bound
\[
  \dim L\leq \frac{t+m-2}{m}.
\]
Here $\ell_1,\ldots,\ell_t$ are nonzero homogeneous linear forms over a field of characteristic zero, the powers $\ell_1^m,\ldots,\ell_t^m$ form a circuit, and $L=\Span\{\ell_1,\ldots,\ell_t\}$.
Rational-normal-curve configurations attain equality for infinitely many pairs $(t,\dim L)$; in particular, the affine bound itself is sharp and the optimal leading constant in Bukh's question is $1/m$.

The proof uses a coding-theoretic translation: the coefficient row space of the powers is the $m$-th Schur power of the coefficient code, and the minimality hypothesis makes this Schur power a full-support hyperplane to which the Schur-product Kneser theorem of Mirandola and Z\'emor applies.
The same method yields flat-concentration and interpolation criteria, a Cayley--Bacharach lower bound, Segre--Veronese and positive-characteristic variants, and Hilbert-function constraints for equality and near-equality in Veronese circuits.
\end{abstract}

\plainfootnotetext{\textit{Email:} \texttt{heng.li@sdu.edu.cn}, \texttt{liuxizhi@ustc.edu.cn}.}

\section{Introduction}

Throughout the paper, a linear form is a homogeneous polynomial of degree one.  Thus, after choosing coordinates $x_1,\ldots,x_n$ on a vector space over a field $\K$, a linear form has the shape
\[
  a_1 x_1+\cdots+a_n x_n
\]
with coefficients $a_i\in\K$ and with no constant term.  Equivalently, it is a linear functional on the underlying vector space.  We write $[t]=\{1,\ldots,t\}$.

Powers of linear forms are the basic rank-one objects in the symmetric tensor, or polynomial Waring, setting.  A decomposition
\[
  F=\lambda_1\ell_1^m+\cdots+\lambda_t\ell_t^m
\]
of a homogeneous form $F$ is a symmetric power-sum decomposition; geometrically, the points $[\ell_i^m]$ lie on the degree-$m$ Veronese variety.  The dimensions of secant varieties of Veronese varieties and the corresponding generic Waring ranks have a large classical literature, including the interpolation theorem of Alexander and Hirschowitz \cite{AlexanderHirschowitz}.  Related background is provided by Iarrobino and Kanev \cite{IarrobinoKanev} on power sums and Gorenstein algebras, and by modern accounts of tensor and secant geometry by Landsberg \cite{Landsberg} and Bernardi, Carlini, Catalisano, Gimigliano, and Oneto \cite{BernardiEtAl}.  Representations of forms as sums of powers of linear forms were studied by Reznick \cite{ReznickEvenPowers} and by Bia{\l}ynicki-Birula and Schinzel \cite{BialynickiBirulaSchinzel}; zero-relation and dependence patterns among polynomial powers were considered by Reznick \cite{ReznickPatterns}, and the linear-form case was studied by S{\l}adek \cite{Sladek}.

The present paper studies a different, more combinatorial, question.  We are not asking for the rank of a general form or the dimension of a secant variety.  Instead, we ask how large the span of the underlying linear forms can be when the corresponding points on the Veronese variety satisfy a support-minimal linear relation.  Thus the relevant object is a \emph{circuit} in the linear matroid represented by the Veronese images.

This circuit viewpoint is also naturally tied to interpolation.  If $X\subseteq\PP^n(\K)$ is a finite set of $\K$-rational points, then, after dualizing, the rank of the degree-$m$ evaluation map on $X$ is the rank of the vectors representing the Veronese images $\nu_m(X)$.  Therefore a circuit after the degree-$m$ Veronese embedding is exactly a minimal obstruction to independent interpolation in degree $m$.  This connection explains why Cayley--Bacharach conditions and Hilbert functions of finite point sets appear later in the paper; for background on these themes, see Davis--Geramita--Orecchia \cite{DGO}, Eisenbud--Green--Harris \cite{EGH}, Geramita--Kreuzer--Robbiano \cite{GKR}, and Kreuzer--Long--Robbiano \cite{KLR}.

We are concerned with support-minimal linear dependencies among powers of linear forms.  Given nonzero linear forms $\ell_1,\ldots,\ell_t$ and an integer $m\geq1$, suppose that
\(
  \ell_1^m,\ldots,\ell_t^m
\)
are linearly dependent, but that every proper subfamily is linearly independent.  We use \emph{circuit} in the linear-matroid sense: the whole family is dependent and every proper subfamily is independent.  The dimension-bound problem addressed here is to bound the dimension of
\[
  L=\Span\{\ell_1,\ldots,\ell_t\}
\]
in terms of the number $t$ of terms in the minimal dependence.

Bukh \cite{BukhProblems} isolated the problem.  In related work on extremal graphs without exponentially small bicliques, Bukh \cite{BukhBicliques} constructs auxiliary varieties with strong independence properties; a key algebraic input is to control minimal linear dependencies
\(
  \sum_{i=1}^t \alpha_i\ell_i^m=0
\)
among $m$-th powers of linear forms.  The earlier bounds used there build on ideas related to representations of polynomials by sums of powers of linear forms, in particular the work of Bia{\l}ynicki-Birula and Schinzel \cite{BialynickiBirulaSchinzel}.  Bukh then asked for the optimal dependence of $\dim L$ on $t$.

More precisely, Bukh \cite{BukhProblems} noted that a bound of the form $\dim L\leq c_m t+O_m(1)$ is known with $c_m=3/(m+4)$ for $m\geq2$, and asked whether the leading constant can be improved to $1/m$ in the following finite form.

\begin{problem}[Bukh \cite{BukhProblems}]\label{prob:bukh}
Let $m\geq2$.  Determine whether there exists a constant $C_m$ such that the following holds.
Let $\K$ be a field of characteristic zero, and let $\ell_1,\ldots,\ell_t$ be nonzero homogeneous linear forms over $\K$ whose $m$-th powers are linearly dependent, but every proper subfamily of these powers is linearly independent.
If
\(
  L=\Span\{\ell_1,\ldots,\ell_t\},
\)
then
\(
  \dim L\leq t/m+C_m?
\)
\end{problem}

The following theorem resolves the dimension-bound part of this problem by giving a sharp affine estimate.

\begin{theorem}\label{thm:main}
Let $\K$ be a field of characteristic zero.  Let $m\geq1$, and let $\ell_1,\ldots,\ell_t$ be nonzero homogeneous linear forms in finitely many variables over $\K$.  Suppose that $\ell_1^m,\ldots,\ell_t^m$ are linearly dependent, but every proper subfamily of these $m$-th powers is linearly independent.  Put
\(
  L=\Span\{\ell_1,\ldots,\ell_t\}.
\)
Then
\[
  \dim L\leq \frac{t+m-2}{m}.
\]
Equivalently, since $\dim L$ is an integer,
\[
  \dim L\leq \left\lfloor\frac{t+m-2}{m}\right\rfloor;
\]
in particular, $\dim L\leq t/m+O_m(1)$.
\end{theorem}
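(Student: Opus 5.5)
We follow the coding-theoretic translation announced in the abstract. Put $k=\dim L$ and choose a basis of $L$, so that each $\ell_i$ becomes a coefficient vector $g_i\in\K^k$. Let $C\subseteq\K^t$ be the code spanned by the rows of the $k\times t$ matrix with columns $g_1,\ldots,g_t$. Since the $\ell_i$ span $L$, we have $\dim C=k$. Since no $\ell_i$ is zero, $C$ has full support.

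Expanding $\ell_i^m$ in monomials of the basis shows that the coefficient matrix of $\ell_1^m,\ldots,\ell_t^m$ has as its rows the coordinatewise products $c_1\ast\cdots\ast c_m$ of rows of $C$. Its row space is therefore the Schur power $C^{\ast m}$. Characteristic zero lets us ignore multinomial coefficients: each is a nonzero scalar, so the monomials and the products of rows give the same span.

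The circuit hypothesis says this column matroid has a unique dependence, and that dependence has full support. Hence $C^{\ast m}$ has dimension $t-1$, and it is the hyperplane $\alpha^\perp$ with $\alpha\in(\K^\ast)^t$. Rescaling coordinates by a diagonal matrix does not change dimensions of Schur powers, so we may view $C^{\ast m}$ as a full-support hyperplane, that is, a code whose dual has minimum distance $t$.

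Next we apply the Mirandola--Z\'emor Schur-product Kneser theorem. For codes $A,B$ it gives
\[
\dim(A\ast B)\ge \dim A+\dim B-\dim\St(A\ast B),
\]
where $\St(D)=\{x: x\ast D\subseteq D\}$ is the stabilizer algebra. The stabilizer of a code is a product of coordinate blocks, namely the span of the indicator vectors $\one_{S_j}$ of a partition of $[t]$. We aim to show that the stabilizer of $C^{\ast m}=\alpha^\perp$ is trivial, spanned by $\one$. Suppose $\one_S$ stabilizes $\alpha^\perp$ with $\emptyset\ne S\ne[t]$. Then $\one_S\ast\alpha^\perp\subseteq\alpha^\perp$, which forces $\alpha$ to be supported on $S$, because we can choose $y\in\alpha^\perp$ with $\sum_{i\in S}\alpha_iy_i\ne0$ unless $\alpha|_S=0$. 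This contradicts full support.

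We then apply the theorem inductively to $C^{\ast j}\ast C$ for $j=1,\ldots,m-1$. The delicate point is that the stabilizers of the intermediate powers $C^{\ast j}$ need not be trivial, and the Kneser bound involves the stabilizer of the product. We plan to use the monotonicity $\St(C^{\ast j})\subseteq\St(C^{\ast(j+1)})$, which holds because $x\ast C^{\ast j}\subseteq C^{\ast j}$ implies $x\ast C^{\ast(j+1)}\subseteq C^{\ast(j+1)}$. Hence every intermediate stabilizer is trivial as well, $\dim\St=1$, and each step gives
\[
\dim C^{\ast(j+1)}\ge \dim C^{\ast j}+k-1 .
\]
Summing from $j=1$ to $m-1$ yields $t-1\ge mk-(m-1)$, that is, $k\le (t+m-2)/m$, as required.

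The main obstacle is making the Kneser step rigorous. We must confirm the exact form of the Mirandola--Z\'emor inequality, including whether it requires the product to have full support or only to be nondegenerate. For $C^{\ast j}$ full support is automatic, since $C$ has full support and a coordinate nonzero on $C$ is nonzero on its powers. We must also check the claim that trivial stabilizer propagates downward from $C^{\ast m}$. If the propagation is problematic, we will instead apply the Kneser inequality directly to $A=C^{\ast(m-1)}$ and $B=C$, and induct on $m$ using $\dim C^{\ast(m-1)}\ge (m-1)(k-1)+1$. That bound is the analogous Kneser estimate, and it holds whenever the stabilizer chain is trivial. Characteristic zero enters only through the identification of the row space with $C^{\ast m}$.
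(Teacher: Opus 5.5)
Your proposal is correct and matches the paper's proof step for step. The chain is the same: $C^{\langle m\rangle}=\alpha^\perp$ is a full-support hyperplane with stabilizer $\K\one$; triviality passes to every $C^{\langle j\rangle}$, since your one-step monotonicity is the paper's observation $uC^{\langle m\rangle}=uC^{\langle j\rangle}C^{\langle m-j\rangle}\subseteq C^{\langle m\rangle}$; and the trivial-stabilizer Kneser bound $\dim C^{\langle j+1\rangle}\geq \dim C^{\langle j\rangle}+\dim C-1$ is then iterated.

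The only differences are cosmetic:
\begin{itemize}
\item The paper proves the needed trivial-stabilizer case of Mirandola--Z\'emor itself, over an infinite field. There a trivial stabilizer of the product already forces full support, so your worry about extra hypotheses disappears.
\item The paper gets $\St(\alpha^\perp)=\K\one$ by noting that $z\mapsto\sum_i\alpha_iu_iz_i$ must be a scalar multiple of $z\mapsto\sum_i\alpha_iz_i$. This also fixes your slightly misstated dichotomy: the correct alternatives are $\alpha|_S=0$ or $\alpha|_{[t]\setminus S}=0$, and both are ruled out by full support.
\end{itemize}
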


The bound in Theorem~\ref{thm:main} is attained by a uniform family of extremal configurations.  For $m=1$, this is the elementary statement that a circuit of $t$ vectors has rank $t-1$.  For $m\geq2$, the bound is best possible in the following strong sense: for every $r\geq1$ there are
\(
  t=m(r-1)+2
\)
linear forms whose $m$-th powers form a minimal dependence and whose span has dimension
\[
  r=\frac{t+m-2}{m}.
\]
These examples come from points on a rational normal curve; the construction is given in Section~\ref{sec:basic-sharpness}.  Consequently, the optimal leading constant in Bukh's dimension-bound question is $c_m=1/m$.  Moreover, because equality holds for the full affine expression when $t=m(r-1)+2$, the right-hand side of Theorem~\ref{thm:main} cannot be decreased by any positive constant independent of $t$ on these parameter values.

For later use, we record an equivalent projective form of Theorem~\ref{thm:main}.  If $W$ is a finite-dimensional vector space over $\K$, the degree-$m$ Veronese embedding
\[
  \nu_m\colon \PP(W)\to \PP(\Sym^m W),
  \qquad [\ell]\mapsto [\ell^m]
\]
sends a projective point represented by a linear form to the projective class of its $m$-th power.  Thus a support-minimal dependence among $\ell_i^m$ is the same as a circuit among the Veronese images $\nu_m([\ell_i])$.  Theorem~\ref{thm:main} is therefore equivalent to the following projective statement.

\begin{theorem}\label{thm:veronese-circuit-form}
Let $\K$ have characteristic zero.  If $p_1,\ldots,p_t\in\PP(W)$ form a circuit after applying $\nu_m$, and if
\(
  d=\dim\langle p_1,\ldots,p_t\rangle,
\)
then
\(
  t\geq md+2.
\)
\end{theorem}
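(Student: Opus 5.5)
The plan is to deduce Theorem~\ref{thm:veronese-circuit-form} from Theorem~\ref{thm:main} by a change of normalisation, and to indicate how I would prove the underlying bound itself. Here $d=\dim\langle p_1,\ldots,p_t\rangle$ is the projective dimension, so $d=\dim L-1$ with $L=\Span\{\ell_1,\ldots,\ell_t\}$ and $p_i=[\ell_i]$. A circuit among the $\nu_m(p_i)$ is exactly a support-minimal dependence among the $\ell_i^m$. The inequality $\dim L\le (t+m-2)/m$ reads $m(d+1)\le t+m-2$, i.e.\ $t\ge md+2$. So the two statements are literally equivalent, and it remains to prove the dimension bound.

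For the core argument I would use the coding-theoretic translation announced in the abstract. Put $k=\dim L$. Let $C\subseteq \K^t$ be the coefficient code $C=\{(\varphi(\ell_1),\ldots,\varphi(\ell_t)) : \varphi\in L^*\}$, which has dimension $k$. The $m$-th Schur power $C^{\ast m}$ is spanned by coordinatewise products of $m$ codewords. I would identify $C^{\ast m}$ with the coefficient row space of $\ell_1^m,\ldots,\ell_t^m$. In characteristic zero, products $\varphi_1\cdots\varphi_m$ of functionals span $(\Sym^m L)^*$ by polarization, and evaluating them at $\ell_i^m$ gives $\varphi_1(\ell_i)\cdots\varphi_m(\ell_i)$. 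Hence $\dim C^{\ast m}$ equals the rank of $\{\ell_i^m\}$. The circuit hypothesis then gives $\dim C^{\ast m}=t-1$. Moreover the orthogonal complement of $C^{\ast m}$ is spanned by the relation vector $\alpha$, which has full support by minimality.

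Next I would show that $C^{\ast m}$ has trivial stabilizer. The stabilizer of a Schur-product code is a product of coordinate-block indicator algebras. If it were nontrivial, $C^{\ast m}$ would split as $\bigoplus_B C^{\ast m}|_B$ over a nontrivial partition, and its dual would be the direct sum of the blockwise duals. Since the dual is one-dimensional, only one block could carry a nonzero dual vector, so $\alpha$ would vanish off that block. This contradicts full support. The stabilizer of $C^{\ast j}$ is contained in that of $C^{\ast m}$ for $j\le m$ (multiplying by $C^{\ast(m-j)}$ preserves stabilization), so every intermediate stabilizer is trivial too. The Schur-product Kneser theorem of Mirandola--Z\'emor then gives $\dim C^{\ast j}\ge \dim C^{\ast (j-1)}+k-1$ for each $2\le j\le m$. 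Inducting yields $t-1=\dim C^{\ast m}\ge m(k-1)+1$, that is, $t\ge md+2$.

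The step I expect to be the main obstacle is the stabilizer bookkeeping. One must check that the Mirandola--Z\'emor statement applies in the form $\dim(A\ast B)\ge\dim A+\dim B-\dim\St(A\ast B)$, where $\St$ is the stabilizer algebra, so that triviality of $\St(C^{\ast j})$ suffices at each step. One also has to verify the containment $\St(C^{\ast j})\subseteq\St(C^{\ast m})$, which holds because $\St(C^{\ast j})\ast C^{\ast j}\subseteq C^{\ast j}$ implies $\St(C^{\ast j})\ast C^{\ast m}\subseteq C^{\ast m}$. The first thing I would pin down is this exact form of the theorem. A fallback is to apply Kneser once to $C^{\ast(m-1)}\ast C$ inside an induction on $m$.
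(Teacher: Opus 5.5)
Your proposal is correct and follows the paper's route: the projective statement is Theorem~\ref{thm:main} rewritten with $d+1=\dim L$, and that theorem is proved by identifying the coefficient space of the $m$-th powers with the Schur power $C^{\langle m\rangle}$, showing that this full-support hyperplane (and hence every intermediate Schur power) has trivial stabilizer, and iterating the Mirandola--Z\'emor Kneser inequality to get $t-1\geq m(\dim L-1)+1$. The only differences are cosmetic: you identify the coefficient space by polarization rather than by multinomial coefficients, you get the trivial stabilizer from a block decomposition of the one-dimensional dual rather than the paper's ``two functionals with the same kernel are proportional'' argument (Lemma~\ref{lem:hyperplane-stabilizer}), and you cite the Kneser theorem where the paper reproves its trivial-stabilizer case (Theorem~\ref{thm:schur-kneser}).
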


This formulation also makes the characteristic restrictions explicit.  The characteristic-zero assumption in Theorem~\ref{thm:main} can be replaced by $\operatorname{char}\K>m$; see Corollary~\ref{cor:large-characteristic-main}.  In small positive characteristic, Frobenius effects lead to a digit-sum version of the bound, with the exponent $m$ replaced by the base-$p$ digit sum $w_p(m)$; see Theorem~\ref{thm:frobenius-corrected}.

The proof of Theorem~\ref{thm:main} uses Schur powers of linear codes.  Products and powers of codes under coordinatewise multiplication have been systematically developed in coding theory, with connections to algebraic geometry, secret sharing, and multiplication algorithms; see Randriambololona \cite{Randriambololona} and the subsequent work of Mirandola and Z\'emor \cite{MirandolaZemor}.  Our use of Schur products is somewhat different from their usual coding-theoretic use: the code records the coefficients of the linear forms, and its Schur powers record the coefficient spaces of the corresponding powers of forms.

More concretely, to $\ell_1,\ldots,\ell_t$ we associate their coefficient row space $C\subseteq\K^t$.  The coefficient row space of $\ell_1^m,\ldots,\ell_t^m$ is the $m$-th Schur power $C^{\langle m\rangle}$.  The support-minimal dependence makes this Schur power a full-support hyperplane.  The relevant lower bound for Schur products is a linear-code analogue of a theorem of Kneser \cite{Kneser} in additive combinatorics: the Schur-product Kneser theorem of Mirandola and Z\'emor \cite[Theorem~3.3]{MirandolaZemor}.  In the present setting, the full-support hyperplane forces the relevant stabilizer to be trivial, so Kneser growth can be iterated through the intermediate Schur powers.

At the level of dimensions, the central estimate is
\[
  t-1=\dim C^{\langle m\rangle}
  \geq m\dim C-(m-1).
\]
Since $\dim C=\dim L$, this is exactly the desired bound.  Sections~\ref{sec:kneser} and~\ref{sec:proof-main} make this reduction precise.

The paper is organized as follows.  Section~\ref{sec:applications} formulates consequences and extensions of the main theorem.  Section~\ref{sec:basic-sharpness} proves the projective reformulation and constructs the rational-normal-curve examples showing sharpness.  Section~\ref{sec:kneser} isolates the Schur-product Kneser input, and Section~\ref{sec:proof-main} proves Theorem~\ref{thm:main}.  The results stated in Section~\ref{sec:applications} are proved in Sections~\ref{sec:geometric-consequences}--\ref{sec:extremal-stability}: Section~\ref{sec:geometric-consequences} gives the flat-concentration, interpolation, and Cayley--Bacharach consequences; Section~\ref{sec:product-positive-characteristic} treats Segre--Veronese and positive-characteristic variants; and Section~\ref{sec:extremal-stability} analyzes equality and near-equality through Hilbert functions.

\section{Applications and further results}\label{sec:applications}

This section formulates the consequences of Theorem~\ref{thm:main} and of the Schur-product method.  The first group of statements is incidence-theoretic: a Veronese dependence contains a circuit whose underlying points span a flat of controlled dimension, and this flat-concentration principle gives interpolation and Cayley--Bacharach lower bounds.  The second group extends the argument to Segre--Veronese maps and to positive characteristic; in the latter case Frobenius twists replace the exponent $m$ by the base-$p$ digit sum $w_p(m)$.  The final group concerns the sharp and near-sharp regimes, where the same mechanism imposes Hilbert-function constraints and a Cayley--Bacharach duality bound.  The proofs of these statements are given after the proof of Theorem~\ref{thm:main}, in the same order as the subsections below.

\subsection{Flat concentration, interpolation, and Cayley--Bacharach sets}

The first applications translate Veronese dependence into incidence-geometric constraints.  They show that a dependence after the degree-$m$ Veronese embedding must already be witnessed by many points in a low-dimensional projective flat.  In this subsection, a flat means a nonempty projective linear subspace defined over $\K$, and all dimensions of flats are projective dimensions.

\begin{proposition}\label{prop:flat-cluster-extraction}
Let $\K$ have characteristic zero, and let $X\subseteq\PP^n(\K)$ be a finite set.  If $\nu_m(X)$ is linearly dependent, then there is a subset $S\subseteq X$ such that $\nu_m(S)$ is a circuit and 
\(
  |S|\geq me+2
\)
with $e=\dim\langle S\rangle$. 
Consequently, any flat-sparsity condition of the form
\[
  |X\cap\Pi|\leq m\dim\Pi+1
\]
for all nonempty projective $\K$-flats $\Pi$ rules out all degree-$m$ Veronese dependencies in $X$.
\end{proposition}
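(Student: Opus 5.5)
The plan is to reduce to Theorem~\ref{thm:veronese-circuit-form} by extracting a circuit from the dependence.

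\textbf{Extracting the circuit.} Suppose $\nu_m(X)$ is linearly dependent. We fix nonzero representatives $v_x\in\Sym^m W$ of the points $\nu_m(x)$, $x\in X$. Among all subsets $S\subseteq X$ with $\{v_x\}_{x\in S}$ linearly dependent, we choose one of minimal cardinality; such subsets exist because $X$ itself is one. By minimality, every proper subfamily of $\{v_x\}_{x\in S}$ is independent, so $\nu_m(S)$ is a circuit in the matroid sense used in the paper.

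Two degenerate points need to be checked.
\begin{itemize}[label=--]
\item Distinct points of $X$ give nonproportional powers. In characteristic zero, $\ell^m=c\,\ell'^m$ forces $\ell$ and $\ell'$ to be proportional, by unique factorization in the polynomial ring. Hence $|S|\geq 2$.
\item The notion of circuit does not depend on the chosen representatives, since rescaling vectors does not change which subfamilies are dependent.
\end{itemize}

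\textbf{Applying the circuit bound.} With $e=\dim\langle S\rangle$, the projective dimension of the span of $S$, Theorem~\ref{thm:veronese-circuit-form} applied to the points of $S$ gives
\[
|S|\geq me+2.
\]
The dimension $d$ in that theorem is the projective dimension of the span. This matches Theorem~\ref{thm:main}: there $\dim L=d+1$, and $\dim L\le (t+m-2)/m$ is equivalent to $t\ge md+2$. So the convention for $e$ used here agrees with the one in Theorem~\ref{thm:veronese-circuit-form}.

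\textbf{The consequence.} Suppose $X$ satisfies the sparsity condition and $\nu_m(X)$ is dependent. Take $S$ as above and let $\Pi=\langle S\rangle$. This is a nonempty projective $\K$-flat, defined over $\K$ because the points of $S$ are $\K$-rational, and $\dim\Pi=e$. Since $S\subseteq X\cap\Pi$,
\[
|X\cap\Pi|\geq |S|\geq me+2>m\dim\Pi+1,
\]
which contradicts the hypothesis. Hence no degree-$m$ Veronese dependence exists in $X$.

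The only step needing care is the distinctness and nondegeneracy issue handled in the first bullet. The rest is a direct application of the projective form of the main theorem, so I expect no real obstacle.
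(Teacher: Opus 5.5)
Your proposal is correct and matches the paper's proof. Like the paper, you pass to a minimal dependent subfamily, apply Theorem~\ref{thm:veronese-circuit-form} to it, and use $\langle S\rangle$ as the flat that violates the sparsity bound. Your unique-factorization bullet is harmless but unnecessary: a dependent family of nonzero vectors automatically has at least two members.
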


As a first consequence, the flat-sparsity criterion implies the following interpolation statement.  To state it, we use the usual Hilbert function of a finite set of points.

Let $X\subseteq\PP^n(\K)$ be a finite reduced set of $\K$-rational points.  We write
\[
  H_X(j)=\dim_\K \operatorname{im}\left(
  H^0(\PP^n,\mathcal O_{\PP^n}(j))\longrightarrow \K^X
  \right)
\]
for its degree-$j$ Hilbert function, after choosing arbitrary nonzero homogeneous representatives for the points of $X$.

\begin{corollary}\label{cor:hilbert-flat}
Let $\K$ have characteristic zero, and let $X\subseteq\PP^n(\K)$ be finite.  Suppose that every nonempty projective $\K$-flat $\Pi\subseteq\PP^n$ satisfies
\[
  |X\cap\Pi|\leq m\dim\Pi+1,
\]
where $\dim\Pi$ denotes projective dimension.  Then
\(
  H_X(m)=|X|.
\)
Equivalently, the points of $X$ impose independent linear conditions on homogeneous forms of degree $m$.
\end{corollary}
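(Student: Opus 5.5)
The plan is to deduce Corollary~\ref{cor:hilbert-flat} directly from the flat-sparsity clause of Proposition~\ref{prop:flat-cluster-extraction}, by dualizing the Hilbert function into a rank statement about Veronese images. Choose a nonzero homogeneous representative $v_x\in\K^{n+1}$ for each $x\in X$, and regard $v_x$ as a linear form $\ell_x$ on the dual space. The evaluation map
\[
  H^0(\PP^n,\mathcal O_{\PP^n}(m))\longrightarrow \K^X,\qquad F\mapsto (F(v_x))_{x\in X},
\]
has as its rows (one per point $x$) the linear functionals $F\mapsto F(v_x)$ on the space of degree-$m$ forms. Under the standard identification of these functionals with $\Sym^m$ of the dual (in characteristic zero, or via divided powers, the functional ``evaluate at $v$'' corresponds to $\ell_v^m$ up to the usual multinomial rescaling), the row for $x$ is identified with $\ell_x^m$, i.e.\ with a representative of $\nu_m(x)$. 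Hence $H_X(m)$, the rank of the evaluation map, equals the rank of the family $\{\nu_m(x)\}_{x\in X}$. In particular $H_X(m)=|X|$ holds if and only if $\nu_m(X)$ is linearly independent. This identification is the only point needing care: I will either write out the pairing $\langle F,\ell_v^m\rangle=m!\,F(v)$, or simply note that evaluation at $v$ is a linear functional whose coordinate vector in the monomial dual basis is $(v^\alpha)_{|\alpha|=m}$, which is the coordinate vector of $\ell_v^m$ after rescaling each coordinate by the nonzero multinomial coefficient. The scaling is legitimate because $\operatorname{char}\K=0$.

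With this, the proof is by contradiction. Suppose $H_X(m)<|X|$. Then $\nu_m(X)$ is linearly dependent, and Proposition~\ref{prop:flat-cluster-extraction} gives $S\subseteq X$ with $\nu_m(S)$ a circuit and $|S|\geq me+2$, where $e=\dim\langle S\rangle$. Take $\Pi=\langle S\rangle$. It is a nonempty $\K$-flat because $S\neq\emptyset$ and consists of $\K$-rational points. It satisfies $|X\cap\Pi|\geq|S|\geq m\dim\Pi+2$, which violates the hypothesis. Hence $\nu_m(X)$ is independent and $H_X(m)=|X|$.

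The equivalence with ``imposing independent conditions'' is just the definition: the restriction map is surjective onto $\K^X$ exactly when its rank is $|X|$. The main obstacle, such as it is, is the duality step above, namely making sure that the rank of the evaluation map equals the rank of the Veronese images independently of the chosen representatives. Rescaling a representative multiplies a row by a nonzero scalar, which does not change the rank, so this is routine.
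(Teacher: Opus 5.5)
Your proposal is correct and follows the paper's argument: assume $\nu_m(X)$ is dependent, extract a circuit $S$ (the step packaged in Proposition~\ref{prop:flat-cluster-extraction}), and use $|S|\geq m\dim\langle S\rangle+2$ on the flat $\langle S\rangle$ to contradict the sparsity hypothesis. Your explicit identification of $H_X(m)$ with the rank of $\nu_m(X)$ via the nonzero multinomial rescaling is left implicit in the paper, and you handle it correctly.
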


The same minimal-obstruction argument yields a lower bound for sets with the Cayley--Bacharach property.  We recall the definition before stating the result.

\begin{definition}\label{def:cbm}
A finite reduced set $X\subseteq\PP^n(\K)$ of $\K$-rational points has the \emph{Cayley--Bacharach property in degree $m$}, abbreviated $CB(m)$, if for every $x\in X$, every homogeneous degree-$m$ form vanishing on $X\setminus\{x\}$ also vanishes at $x$.
\end{definition}

For sets minimal with respect to this property, the flat-concentration principle gives the same lower bound as for Veronese circuits.

\begin{corollary}\label{cor:cb-lower-bound}
Let $\K$ have characteristic zero.  Let $X\subseteq\PP^n(\K)$ be a nonempty finite reduced set of $\K$-rational points which is inclusion-minimal among nonempty finite reduced subsets with the $CB(m)$ property.  If $d=\dim\langle X\rangle$, then
\(
  |X|\geq md+2.
\)
The bound is attained by the rational-normal-curve examples in Section~\ref{sec:basic-sharpness}.
\end{corollary}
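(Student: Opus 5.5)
The plan is to show that an inclusion-minimal $CB(m)$ set $X$ is, after the Veronese embedding, exactly a circuit. Then \cref{thm:veronese-circuit-form} gives $|X|\geq md+2$ directly. Fix representatives $v_x$ of the points and put $u_x=\nu_m(v_x)\in\Sym^m$ (dually, evaluation functionals on degree-$m$ forms). The key translation is standard linear algebra. A degree-$m$ form vanishing on $X\setminus\{x\}$ but not at $x$ exists if and only if $u_x\notin\Span\{u_y: y\neq x\}$. So $X$ has $CB(m)$ if and only if every $u_x$ lies in the span of the others. Equivalently, every element of $X$ lies in the support of some linear dependence among the $u_y$, that is, every element lies in some circuit of the linear matroid on $\nu_m(X)$. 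We also need nonemptiness to force $|X|\geq2$: a single point is not $CB(m)$, because some degree-$m$ form, say $\ell^m$ with $\ell(v_x)\neq0$, is nonzero at it.

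Next, pick any $x\in X$; by the above, there is a circuit $S\subseteq X$ of $\nu_m(X)$ containing $x$. The main claim is that $S$ itself has the $CB(m)$ property: in a circuit, each $u_s$ lies in the span of the others, by the support-minimal relation with all coefficients nonzero. $S$ is nonempty, so inclusion-minimality of $X$ forces $S=X$. Hence $\nu_m(X)$ is a circuit, and $d=\dim\langle X\rangle$ is the projective dimension of the span of the points.

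Applying \cref{thm:veronese-circuit-form} (the characteristic-zero hypothesis is in force) yields $|X|\geq md+2$. Alternatively, one can invoke \cref{prop:flat-cluster-extraction} with $S=X$. For sharpness, I will note that the rational-normal-curve configurations of Section~\ref{sec:basic-sharpness} are Veronese circuits with $t=md+2$. By the reverse direction of the translation, every circuit is a $CB(m)$ set. It is also minimal: a proper nonempty subset $Y$ has linearly independent image, so $Y$ is not $CB(m)$.

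The main obstacle is keeping the equivalence ``$CB(m)$ $\iff$ every point lies in a circuit'' precise. One point is the dictionary between forms vanishing at points and the span of the $u_x$. Here I will use the perfect pairing between $\Sym^m$ of the dual space and degree-$m$ forms, which is harmless in characteristic zero, or simply the evaluation matrix whose row-rank statements are field-independent. The other point is that the definition of $\dim\langle X\rangle$ must match the $d$ of \cref{thm:veronese-circuit-form}. Everything else is formal.
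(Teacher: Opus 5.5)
Your proposal is correct and follows essentially the same route as the paper: you translate $CB(m)$ into each $\nu_m(x)$ lying in the span of the others, extract a circuit (you take one through a chosen point, the paper takes any inclusion-minimal dependent subset), note that a circuit is itself $CB(m)$ so minimality forces it to equal $X$, and conclude with Theorem~\ref{thm:veronese-circuit-form}. Your sharpness argument (a circuit is $CB(m)$, while proper subsets have independent images and so are not $CB(m)$) is also the paper's.
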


\subsection{Segre--Veronese and positive-characteristic variants}

We next record extensions illustrating the flexibility of the Schur-product method.  The same argument applies to partially symmetric rank-one tensors and to Frobenius-decomposed powers in characteristic $p$.

\begin{theorem}\label{thm:segre-veronese}
Assume that either $\operatorname{char}\K=0$ or $\operatorname{char}\K>\max_q m_q$.  Let $U_1,\ldots,U_s$ be finite-dimensional vector spaces over $\K$, and let $m_1,\ldots,m_s\geq1$.  For each $i\in[t]$ and $q\in[s]$, let $\ell_{i,q}\in U_q\setminus\{0\}$, and put
\[
  F_i=\ell_{i,1}^{m_1}\otimes\cdots\otimes \ell_{i,s}^{m_s}
  \in\bigotimes_{q=1}^s\Sym^{m_q}U_q.
\]
Suppose that $F_1,\ldots,F_t$ are linearly dependent, but every proper subfamily is linearly independent.  If
\[
  L_q=\Span\{\ell_{1,q},\ldots,\ell_{t,q}\}\subseteq U_q,
\]
then
\[
  t\geq 2+\sum_{q=1}^s m_q(\dim L_q-1).
\]
\end{theorem}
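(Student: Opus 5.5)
We adapt the Schur-power argument behind Theorem~\ref{thm:main} to several factors. For each $q\in[s]$, fix a basis of $L_q$ and let $C_q\subseteq\K^t$ be the coefficient row space of $\ell_{1,q},\ldots,\ell_{t,q}$. The $i$-th column of the generator matrix of $C_q$ is the coordinate vector of $\ell_{i,q}$, and since $L_q$ is spanned by these forms, $\dim C_q=\dim L_q$.

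The first step is to identify the coefficient row space of $F_1,\ldots,F_t$. By the one-factor case used for Theorem~\ref{thm:main}, the coefficient row space of the $\ell_{i,q}^{m_q}$ in $\Sym^{m_q}U_q$ is $C_q^{\langle m_q\rangle}$. This uses $\operatorname{char}\K=0$ or $\operatorname{char}\K>m_q$, so that the multinomial coefficients are invertible and monomial coefficients span the Schur power. Coordinates of a tensor product of vectors are products of coordinates. Hence the coefficient row space of the $F_i$ is the Schur product
\[
  D=C_1^{\langle m_1\rangle}*\cdots*C_s^{\langle m_s\rangle}.
\]
Equivalently, $D$ is the Schur product of the multiset consisting of $m_q$ copies of each $C_q$.

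The second step uses the circuit hypothesis. The columns of a generator matrix of $D$ form a circuit, so $\dim D=t-1$. The unique dependence has all coefficients nonzero, so $D$ is the orthogonal hyperplane of a full-support vector.

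The third step applies the Schur-product Kneser theorem of Mirandola and Z\'emor, as isolated in Section~\ref{sec:kneser}. We iterate it over the $M=\sum_q m_q$ factors in the product defining $D$. At each stage the partial product $A$ is multiplied by one more factor $B$, and Kneser gives
\[
  \dim(A*B)\geq\dim A+\dim B-\dim\St(A*B).
\]
As in the proof of Theorem~\ref{thm:main}, each intermediate product multiplied by the remaining factors yields $D$. Any stabilizer of an intermediate product therefore stabilizes $D$. A full-support hyperplane has trivial stabilizer, namely only $\K\one$, which has dimension $1$; this is where full support is needed. Summing the inequalities gives
\[
  t-1=\dim D\geq\sum_q m_q\dim C_q-(M-1)=1+\sum_q m_q(\dim L_q-1),
\]
which is the claimed bound.

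The main obstacle is this stabilizer transfer. It is exactly the lemma already needed for Theorem~\ref{thm:main}, so we expect it to carry over verbatim. One point needs care: the statement's stabilizer algebra of $A*B$ must be contained in that of $D$. This holds because $\St(A)*A\subseteq A$ implies $\St(A)*D\subseteq D$. Degenerate factors with $\dim L_q=1$ contribute $0$ to both sides, and the inequality then still holds.
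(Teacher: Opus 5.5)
Your proposal is correct and follows the paper's proof essentially step for step. Both arguments use the coefficient codes $C_q$, identify the coefficient row space with $C_1^{\langle m_1\rangle}\cdots C_s^{\langle m_s\rangle}$, observe that the full-support normal vector forces $\St=\K\one$, transfer this to all partial products, and then apply Kneser $M-1$ times.

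The only step the paper includes that you omit is the initial base change to an infinite field via Lemma~\ref{lem:base-change-circuits}. This is needed because $\operatorname{char}\K>\max_q m_q$ allows finite $\K$, while the Kneser input of Section~\ref{sec:kneser} is proved only over infinite fields.
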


In positive characteristic, the Schur-product argument naturally produces a bound in terms of the sum of the base-$p$ digits of the exponent.

For the positive-characteristic results, write the base-$p$ expansion of $m$ as
\[
  m=m_0+m_1p+\cdots+m_ap^a,
  \qquad 0\leq m_j<p,
\]
and set $w_p(m)=m_0+m_1+\cdots+m_a$.

\begin{theorem}\label{thm:frobenius-corrected}
Let $\K$ be a field of characteristic $p>0$.  Let $m\geq1$, and let $\ell_1,\ldots,\ell_t$ be nonzero homogeneous linear forms over $\K$.  Suppose that $\ell_1^m,\ldots,\ell_t^m$ are linearly dependent, but every proper subfamily is linearly independent.  Put
\(
  L=\Span\{\ell_1,\ldots,\ell_t\}.
\)
Then
\[
  \dim L\leq \frac{t+w_p(m)-2}{w_p(m)}.
\]
Equivalently, if $d=\dim L-1$, then $t\geq w_p(m)d+2$.
\end{theorem}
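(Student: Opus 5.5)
We prove Theorem~\ref{thm:frobenius-corrected} by reducing it to a Schur-product statement, exactly as in the proof of Theorem~\ref{thm:main}, with the single Schur power $C^{\langle m\rangle}$ replaced by a product of Frobenius-twisted powers. Let $C\subseteq\K^t$ be the coefficient row space of $\ell_1,\ldots,\ell_t$, so $\dim C=\dim L=d+1$. Write $m=\sum_j m_jp^j$ in base $p$. In characteristic $p$ we have
\[
\ell^m=\prod_j \bigl(\ell^{p^j}\bigr)^{m_j}, \qquad \ell^{p^j}=\sum_k a_k^{p^j}x_k^{p^j}.
\]
Hence the coefficient row space of $\ell_i^m$ lies inside the Schur product
\[
D=\prod_j \bigl(C^{(p^j)}\bigr)^{\langle m_j\rangle},
\]
where $C^{(q)}$ is the code obtained by raising every coordinate of every codeword to the $q$-th power. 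The code $C^{(q)}$ has the same dimension as $C$ (use a reduced echelon basis, or note that Frobenius is an injective semilinear map after extending scalars to a perfect closure). Equality of this row space with $D$ should follow because the monomials $\prod_j x^{p^j\alpha_j}$ with $|\alpha_j|=m_j<p$ are distinct, and the multinomial coefficients $\binom{m_j}{\alpha_j}$ are nonzero mod $p$ by Lucas' theorem. Thus the coefficient vectors of $\ell_i^m$ span exactly the Schur product of $w_p(m)$ codes $C_1,\ldots,C_{w}$, each being some Frobenius twist of $C$ and each of dimension $\dim C$.

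Next we use the circuit hypothesis exactly as in Section~\ref{sec:proof-main}. The dependence space of the columns $\ell_i^m$ is the dual of the row space $D$. Since the $m$-th powers form a circuit, this dual is one-dimensional and spanned by a vector with full support. So $D$ is a hyperplane $\{y:\sum\alpha_iy_i=0\}$ with all $\alpha_i\neq0$, and $\dim D=t-1$.

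We then apply the Mirandola--Z\'emor Schur-product Kneser theorem in the form isolated in Section~\ref{sec:kneser}, iterated along the partial products $C_1\ast\cdots\ast C_k$. The key point, as before, is that every stabilizer that appears is trivial. Any such stabilizer stabilizes the full product $D$, and the stabilizer algebra of a full-support hyperplane is $\K\mathbf 1$. The only possible exceptions are degenerate cases such as $t\le 2$, which are handled directly. The Kneser inequality for a product of $w$ codes with trivial stabilizer then gives
\[
t-1=\dim D\ge \sum_{k=1}^{w}\dim C_k-(w-1)=w(d+1)-w+1=wd+1.
\]
This is $t\ge w_p(m)d+2$.

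The main obstacle is making sure the Kneser input applies to a product of \emph{distinct} codes, namely the Frobenius twists, rather than to powers of a single code. I expect the Mirandola--Z\'emor theorem to be stated for arbitrary pairs of codes, $\dim(A\ast B)\ge\dim A+\dim B-\dim\St(A\ast B)$, so iterating it over the factors should be routine. The remaining care is in two checks. First, intermediate stabilizers must inject into the stabilizer of $D$: if $s\ast P\subseteq P$ for a partial product $P$, then $s\ast D\subseteq D$. Second, the twisted codes must contain $\mathbf 1$-compatible structure, or else we must argue via supports, since each $\ell_i\ne0$ means every $C_k$ has full support.
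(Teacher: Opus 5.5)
Your proposal is correct and is essentially the paper's proof. The coefficient space of the $m$-th powers is the Schur product of $w_p(m)$ Frobenius twists of $C$ (as in \cref{lem:frobenius-coefficient-space}); the circuit makes it a full-support hyperplane with stabilizer $\K\one$; and iterating the Kneser inequality along partial products (\cref{prop:multi-factor-schur}) gives $t-1\geq w_p(m)d+1$.

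The one detail to make explicit is that the paper first extends scalars to an algebraic closure via \cref{lem:base-change-circuits}, because its Kneser input \cref{thm:schur-kneser} is proved only over infinite fields. Your reduced-echelon argument for $\dim C^{(q)}=\dim C$ removes the need for perfectness, but over a finite field such as $\mathbb{F}_p$ you still need this base change, or else the base-change invariance of Schur products and stabilizers.
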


For $p>m$, this digit sum is equal to $m$, and the characteristic-zero projective bound is recovered verbatim.

\begin{corollary}\label{cor:veronese-circuit-large-characteristic}
Let $\K$ have characteristic $p>m$, let $W$ be a finite-dimensional vector space over $\K$, and let $\nu_m$ be the degree-$m$ Veronese embedding.  If points $p_1,\ldots,p_t\in\PP(W)$ form a circuit after applying $\nu_m$, and if
\(
  d=\dim\langle p_1,\ldots,p_t\rangle,
\)
then
\(
  t\geq md+2.
\)
\end{corollary}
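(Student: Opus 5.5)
The corollary should follow directly from Theorem~\ref{thm:frobenius-corrected}, translated into projective language exactly as Theorem~\ref{thm:veronese-circuit-form} was obtained from Theorem~\ref{thm:main}.

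First I would compute the digit sum. Since $p>m\geq1$, the base-$p$ expansion of $m$ has a single digit, namely $m_0=m$. Hence $w_p(m)=m$.

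Next comes the dictionary between the two settings. Choose nonzero representatives $\ell_i\in W$ of the points $p_i$; elements of $W$ play the role of linear forms, with $W$ standing in for the dual of the ambient space. By definition, $\nu_m(p_i)=[\ell_i^m]$. The statement that $\nu_m(p_1),\ldots,\nu_m(p_t)$ form a circuit means precisely that $\ell_1^m,\ldots,\ell_t^m$ are linearly dependent while every proper subfamily is independent. This notion does not depend on the choice of representatives: rescaling $\ell_i$ by $c\neq0$ rescales $\ell_i^m$ by $c^m\neq0$, which preserves both dependence and independence of every subfamily. Moreover, the projective span $\langle p_1,\ldots,p_t\rangle$ is $\PP(L)$ with $L=\Span\{\ell_1,\ldots,\ell_t\}$, so $d=\dim L-1$.

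Applying Theorem~\ref{thm:frobenius-corrected} in its equivalent form then gives
\[
t\geq w_p(m)\,d+2=md+2.
\]
One small point needs care: Theorem~\ref{thm:frobenius-corrected} is phrased for linear forms in finitely many variables, and $W$ is finite-dimensional, so we may identify $W$ with the space of linear forms on $W^\vee$ after fixing a basis.

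There is no real obstacle here. The only thing to check is that "circuit after applying $\nu_m$" refers to linear dependence of representatives in $\Sym^m W$, which is exactly how the paper defines it.
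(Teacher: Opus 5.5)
Your proposal is correct and is essentially the paper's own proof: choose representatives $\ell_i$ so that $d+1=\dim\Span\{\ell_1,\ldots,\ell_t\}$, observe that $w_p(m)=m$ because $p>m$, and apply Theorem~\ref{thm:frobenius-corrected} to obtain $t\geq md+2$. Your additional checks (independence of the choice of representatives and identifying $W$ with linear forms) are harmless extra care that the paper leaves implicit.
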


The following example shows that the digit-sum correction is necessary in the pure Frobenius case.

\begin{proposition}\label{prop:frobenius-degeneration}
Let $\K$ be a field of characteristic $p>0$, and let $m=p^a$.  For every $r\geq1$, there are $t=r+1$ nonzero linear forms $\ell_1,\ldots,\ell_t$ spanning an $r$-dimensional space such that $\ell_1^m,\ldots,\ell_t^m$ form a circuit.
\end{proposition}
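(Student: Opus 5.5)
The plan is to exploit additivity of Frobenius: in characteristic $p$ with $m=p^a$ we have $(\sum_j a_j x_j)^m=\sum_j a_j^m x_j^m$. So the map $\ell\mapsto\ell^m$ is additive, and it is semilinear with respect to the field endomorphism $\phi\colon\lambda\mapsto\lambda^m$. We take $r$ coordinates $x_1,\ldots,x_r$ and the $t=r+1$ forms
\[
  \ell_i=x_i\ (1\le i\le r),\qquad \ell_{r+1}=x_1+\cdots+x_r .
\]
These are nonzero and span the $r$-dimensional space $\Span\{x_1,\ldots,x_r\}$. The case $r=1$ gives $\ell_1=\ell_2=x_1$, whose powers are dependent, and each single power is nonzero, so this is a circuit. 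That degenerate case is acceptable here, since the statement does not require the projective points to be distinct. If distinctness were wanted, one could take $\ell_2=2x_1$ when $p\ne2$; when $p=2$, $r=1$ would still need repeated points. We simply allow repetition, as the statement speaks of forms.

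Dependence is immediate from Frobenius additivity:
\[
  \ell_{r+1}^m=x_1^m+\cdots+x_r^m,
\]
so $\ell_1^m+\cdots+\ell_r^m-\ell_{r+1}^m=0$ is a relation with all coefficients nonzero.

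For minimality, we check that every $r$-element subfamily is independent. The monomials $x_1^m,\ldots,x_r^m$ are distinct monomials, hence linearly independent in $\Sym^m$. Removing $\ell_{r+1}$ leaves exactly these monomials. Removing some $\ell_j$ with $j\le r$ leaves $\{x_i^m\}_{i\ne j}$ together with $\sum_i x_i^m$. This family is related to the basis $\{x_i^m\}_{i=1}^r$ by a unitriangular change of basis: the sum contains $x_j^m$ with coefficient $1$. Hence it is also independent, and the whole family is a circuit of size $t=r+1$.

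There is no real obstacle. The only point needing care is that the identity $(a+b)^{p^a}=a^{p^a}+b^{p^a}$ holds in $\K[x_1,\ldots,x_r]$ because $\operatorname{char}\K=p$. We would also remark that the example matches Theorem~\ref{thm:frobenius-corrected}: there $w_p(p^a)=1$, so the bound reads $t\ge d+2=r+1$ and is attained. By contrast, Corollary~\ref{cor:veronese-circuit-large-characteristic} would demand $t\ge p^a(r-1)+2$, which fails for $r\ge2$. This shows that the digit-sum correction is necessary.
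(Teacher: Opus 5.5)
Your proof is correct and is essentially the paper's argument: the same configuration $x_1,\ldots,x_r,\,x_1+\cdots+x_r$, dependence from the additivity of the $p^a$-th power map, and minimality by looking at the coefficient of the omitted monomial $x_j^{m}$ (your unitriangular change of basis is the same observation). One side remark is slightly off, though it does not affect the proof: for $r=1$, all nonzero forms in a one-dimensional space define the same projective point for every $p$, so only the forms themselves, not the projective points, can be made distinct.
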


\subsection{Extremal and near-extremal Veronese circuits}

We finally state the structural consequences for equality and near-equality in the Veronese-circuit bound.  Throughout this subsection, $\K$ has characteristic zero, $m\geq1$, and $X\subseteq\PP^a(\K)$ is a reduced nondegenerate finite set of $\K$-rational points.  We write
\[
  \Delta H_X(j)=H_X(j)-H_X(j-1),\qquad H_X(-1)=0.
\]

\begin{theorem}\label{thm:extremal-cb}
Let $\K$ be a field of characteristic zero.  Let $m\geq1$ and $a\geq1$, and let $X\subseteq\PP^a(\K)$ be a reduced nondegenerate finite set of $\K$-rational points.  Then $\nu_m(X)$ is a circuit and
\(
  |X|=ma+2
\)
if and only if $X$ has $CB(m)$ and
\[
  \Delta H_X=(1,\underbrace{a,\ldots,a}_{m\text{ times}},1).
\]
Equivalently, the nonzero first differences are
\(
  1,\underbrace{a,\ldots,a}_{m\text{ times}},1.
\)
\end{theorem}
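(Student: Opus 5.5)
We will prove both directions by working with the Hilbert function of $X$, identified with the Schur-power dimensions of the coefficient code. Write $X=\{p_1,\ldots,p_t\}$ with $t=|X|$. Let $C\subseteq\K^t$ be the row space of the $(a+1)\times t$ coordinate matrix of representatives of the points. Then $\dim C^{\langle j\rangle}=H_X(j)$ for every $j\geq0$, with $C^{\langle 0\rangle}=\langle\one\rangle$. Since $X$ is nondegenerate, $H_X(1)=a+1$.

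The key preliminary fact is the translation of the two hypotheses into code language. First, $\nu_m(X)$ is dependent exactly when $H_X(m)<t$. Second, $X$ has $CB(m)$ exactly when $(C^{\langle m\rangle})^\perp$ contains no vector vanishing at a single coordinate $x$ without vanishing identically. Equivalently, no form of degree $m$ separates a point, i.e.\ $C^{\langle m\rangle}$ contains none of the unit vectors $e_x$. Under this translation, $\nu_m(X)$ is a circuit if and only if $C^{\langle m\rangle}$ is a hyperplane of full support, that is, $H_X(m)=t-1$ and $(C^{\langle m\rangle})^\perp$ is spanned by a vector with all coordinates nonzero.

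\emph{Forward direction.} Assume $\nu_m(X)$ is a circuit and $t=ma+2$. A circuit gives $CB(m)$ immediately: the unique dependence has full support, so every form of degree $m$ vanishing on $X\setminus\{x\}$ vanishes at $x$. We then have $H_X(m)=t-1=ma+1$.
\begin{itemize}
\item The Kneser iteration behind Theorem~\ref{thm:main} (Section~\ref{sec:kneser}) gives $\dim C^{\langle j+1\rangle}\geq\dim C^{\langle j\rangle}+a$ for each $1\leq j<m$. This is because the stabilizer is trivial, which is forced by the full-support hyperplane and inherited by all intermediate powers.
\item Summing these steps gives $H_X(m)\geq(a+1)+(m-1)a=ma+1$. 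Equality forces every step to equal exactly $a$, so $\Delta H_X(j)=a$ for $1\leq j\leq m$.
\item Since $H_X(m+1)\leq t=ma+2$ and the Hilbert function of points is nondecreasing and reaches $t$, we get $\Delta H_X(m+1)=1$ and $\Delta H_X=0$ afterwards.
\end{itemize}
The only potential trap is $H_X(m+1)$ equal to $t-1$. This is excluded because $\Delta H_X$ is zero after it first vanishes (a standard fact for reduced points, via a general linear nonzerodivisor), and the sum of the $\Delta H_X$ values must equal $t$.

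\emph{Converse direction.} Assume $CB(m)$ and the stated $\Delta H_X$. Summing gives $t=1+ma+1=ma+2$, and $H_X(m)=t-1$, so $C^{\langle m\rangle}$ is a hyperplane. Its orthogonal complement is spanned by a single vector $\lambda$. By $CB(m)$, $\lambda$ has full support: if $\lambda_x=0$, then $e_x\in C^{\langle m\rangle}$, giving a degree-$m$ form that separates $x$. A hyperplane with full-support orthogonal vector means that the unique dependence among the $\ell_i^m$ involves every index. Therefore every proper subfamily is independent, and $\nu_m(X)$ is a circuit.

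\emph{Main obstacle.} The crux is the forward step showing that each intermediate increment is at least $a$. This requires the Mirandola--Zémor Kneser bound $\dim(AB)\geq\dim A+\dim B-\dim\St(AB)$ with $A=C^{\langle j\rangle}$, $B=C$, and the stabilizer trivial at every level $j<m$. I would reuse the lemma of Section~\ref{sec:kneser} asserting exactly this. The argument is that a nontrivial stabilizer of $C^{\langle j\rangle}$ also stabilizes $C^{\langle m\rangle}$. But the stabilizer algebra of a full-support hyperplane is only $\langle\one\rangle$, because a decomposition of the coordinates would split the hyperplane as a direct sum and contradict the full support of $\lambda$.
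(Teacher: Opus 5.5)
Your proposal is correct and follows the paper's proof essentially step for step: circuit $\Rightarrow$ $CB(m)$ with $H_X(m)=ma+1$, Kneser growth $H_X(j+1)\geq H_X(j)+a$ from trivial stabilizers inherited from $C^{\langle m\rangle}=\lambda^\perp$, equality forced at every step, and the no-plateau fact giving $\Delta H_X(m+1)=1$. The differences are cosmetic: in the converse you use full support of $\lambda$ instead of the deletion identity $H_{X\setminus\{p\}}(m)=|X\setminus\{p\}|$, and you prove the trivial stabilizer via the block structure of unital subalgebras rather than by comparing functionals. Just tidy the wording so that the condition is always full support of the normal vector $\lambda$ (equivalently $e_x\notin C^{\langle m\rangle}$) rather than ``full support of the hyperplane''; full support of $\lambda$ is what your argument actually uses.
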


Over an algebraically closed field, the same Hilbert-function condition has the following standard arithmetically Gorenstein interpretation.  This interpretation is not used in the proof of the main theorem or in the stability results.

\begin{remark}[Arithmetically Gorenstein interpretation]\label{rem:ag-interpretation}
Let $\K$ be an algebraically closed field of characteristic zero, and let $m\geq1$ and $a\geq1$.  For a reduced nondegenerate finite set $X\subseteq\PP^a(\K)$ of $\K$-rational points, the following conditions are equivalent:
\begin{enumerate}[label=\textup{(\roman*)}]
  \item $\nu_m(X)$ is a circuit and $|X|=ma+2$;
  \item $X$ has $CB(m)$ and $\Delta H_X=(1,\underbrace{a,\ldots,a}_{m\text{ times}},1)$;
  \item $X$ is arithmetically Gorenstein with $h$-vector $(1,\underbrace{a,\ldots,a}_{m\text{ times}},1)$.
\end{enumerate}
\end{remark}

We next turn to near-equality.  The following parameters measure the surplus of the Hilbert function above the extremal lower envelope.

Suppose now that $\nu_m(X)$ is a circuit and
\(
  |X|=ma+2+s
\)
for some $s\geq0$.  Define
\[
  \delta_j=H_X(j)-(ja+1)\qquad(0\leq j\leq m).
\]
For $m=1$, the theorem below forces $s=0$.

\begin{theorem}\label{thm:hilbert-stability}
Let $\K$ be a field of characteristic zero.  Let $m\geq1$ and $a\geq1$, and let $X\subseteq\PP^a(\K)$ be a reduced nondegenerate finite set of $\K$-rational points.  Assume that $\nu_m(X)$ is a circuit and
\(
  |X|=ma+2+s
\)
for an integer $s\geq0$.  Then
\[
  0=\delta_0=\delta_1\leq\delta_2\leq\cdots\leq\delta_m=s.
\]
If $m=1$, then $s=0$ and
\(
  \Delta H_X=(1,a,1).
\)
If $m\geq2$, equivalently,
\[
  \Delta H_X=(1,a,a+\varepsilon_2,\ldots,a+\varepsilon_m,1),
\]
where
\(
  \varepsilon_j\geq0
  \) and \(
  \sum_{j=2}^m\varepsilon_j=s.
\)
\end{theorem}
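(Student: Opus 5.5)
The plan is to translate the Hilbert function of $X$ into dimensions of Schur powers of a single code, and then get monotonicity of the $\delta_j$ from one step of the Schur-product Kneser inequality.

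\textbf{Setup.} Fix homogeneous representatives $v_1,\ldots,v_t$ of the points of $X$, where $t=|X|=ma+2+s$. Let $C\subseteq\K^t$ be the image of the evaluation map on linear forms, $\ell\mapsto(\ell(v_i))_i$. Since products of linear forms span all forms of each degree,
\[
  H_X(j)=\dim C^{\langle j\rangle}\qquad(j\geq 1),\qquad H_X(0)=1=\dim C^{\langle 0\rangle},
\]
with $C^{\langle 0\rangle}=\K\one$. This is the dictionary already used in Sections~\ref{sec:kneser} and~\ref{sec:proof-main}.

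\textbf{The endpoints.}
\begin{enumerate}[label=\textup{(\alph*)}]
  \item Nondegeneracy gives $H_X(1)=a+1$, so $\delta_0=\delta_1=0$.
  \item The circuit hypothesis says that $C^{\langle m\rangle}$ is a full-support hyperplane of $\K^t$. Hence $H_X(m)=t-1=ma+1+s$, that is, $\delta_m=s$.
\end{enumerate}

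\textbf{Monotonicity.} The inequality $\delta_j\leq\delta_{j+1}$ for $1\leq j\leq m-1$ is equivalent to
\[
  \dim C^{\langle j+1\rangle}\geq\dim C^{\langle j\rangle}+\dim C-1 .
\]
I will obtain this from the Mirandola--Z\'emor theorem \cite[Theorem~3.3]{MirandolaZemor}, applied to $C^{\langle j+1\rangle}=C^{\langle j\rangle}*C$. That theorem gives the inequality above with an error term $\dim\St(C^{\langle j+1\rangle})$ in place of $1$.

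So it suffices that the stabilizer of $C^{\langle j+1\rangle}$ is trivial, namely $\St(C^{\langle j+1\rangle})=\K\one$. Write $C^{\langle m\rangle}=C^{\langle j+1\rangle}*C^{\langle m-j-1\rangle}$. Any element stabilizing $C^{\langle j+1\rangle}$ then stabilizes $C^{\langle m\rangle}$. By the lemma of Section~\ref{sec:kneser}, the stabilizer of a full-support hyperplane arising from a circuit is trivial.

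This stabilizer transfer is the step I expect to need the most care, but it is the same mechanism that drives the iteration in the proof of Theorem~\ref{thm:main}. I would simply quote that lemma, making sure it is stated for every intermediate power and not only for the top one.

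\textbf{The tail of $\Delta H_X$.} I also need $H_X(m+1)=t$. Fix $y\in X$ and pick some $x\neq y$. Since $X\setminus\{x\}$ is independent in degree $m$, there is a degree-$m$ form $f$ that vanishes on $X\setminus\{x,y\}$ but not at $y$. Choose a linear form $\ell$ with $\ell(x)=0$ and $\ell(y)\neq0$. Then $f\ell$ separates $y$ from $X\setminus\{y\}$, so the points impose independent conditions in degree $m+1$. Hence $\Delta H_X(m+1)=1$, and $\Delta H_X(j)=0$ for $j>m+1$.

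\textbf{Assembling the statement.} Set $\varepsilon_j=\delta_j-\delta_{j-1}\geq0$ for $2\leq j\leq m$. Since $\delta_1=0$ and $\delta_m=s$, these telescope to $\sum_{j=2}^m\varepsilon_j=s$, giving the displayed form $\Delta H_X=(1,a,a+\varepsilon_2,\ldots,a+\varepsilon_m,1)$.

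For $m=1$ the chain of inequalities is empty. Then $\delta_1=0=s$ forces $s=0$, and the tail computation gives $\Delta H_X=(1,a,1)$.
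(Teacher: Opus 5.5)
Your proposal is correct and follows the paper's proof. The endpoints $\delta_0=\delta_1=0$ and $\delta_m=s$ are obtained as you describe. Monotonicity is exactly Lemma~\ref{lem:evaluation-growth}, namely Schur--Kneser with the stabilizer of $E_{j+1}$ pulled back to that of the full-support hyperplane $E_m$; the latter is Lemma~\ref{lem:hyperplane-stabilizer}, which sits in the proof-of-main-theorem section rather than the Kneser section.

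The only deviation is the tail $H_X(m+1)=|X|$. The paper gets it from the general Lemma~\ref{lem:no-plateau}, via an Artinian reduction by a nonvanishing linear form. Your direct separator $f\ell$, which uses that every deletion $X\setminus\{x\}$ is independent in degree $m$, is a valid and more elementary substitute tailored to the circuit hypothesis.
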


The circuit relation further imposes a duality constraint on these surplus terms, giving a second form of stability.

Let $E_j\subseteq\K^X$ be the degree-$j$ evaluation space.  Since $\nu_m(X)$ is a circuit, there is a vector $\lambda=(\lambda_x)_{x\in X}\in(\K^\times)^X$ such that $E_m=\lambda^\perp$.  Define
\[
  \langle u,v\rangle_\lambda=\sum_{x\in X}\lambda_x u_x v_x.
\]

\begin{theorem}\label{thm:duality-stability}
Under the hypotheses of Theorem~\ref{thm:hilbert-stability}, for every $0\leq i\leq m$ one has
\(
  \delta_i+\delta_{m-i}\leq s.
\)
Equivalently,
\[
  H_X(i)+H_X(m-i)\leq |X|.
\]
Moreover, if $D_i=E_i^\perp/E_{m-i}$, where $E_i^\perp$ is taken with respect to $\langle-,-\rangle_\lambda$, then
\[
  \dim_\K D_i=s-\delta_i-\delta_{m-i}
  \qquad(0\leq i\leq m),
\]
and hence
\[
  \sum_{i=0}^m\dim_\K D_i
  =(m+1)s-2\sum_{i=0}^m\delta_i
  \leq (m+1)s.
\]
\end{theorem}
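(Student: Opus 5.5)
The plan is to deduce everything from one containment: the Schur product of two evaluation spaces lands in $E_m$, and $E_m$ is the hyperplane $\lambda^\perp$. After fixing homogeneous representatives for the points of $X$, I regard $E_j\subseteq\K^X$ as the image of the evaluation map on forms of degree $j$. Then $\dim E_j=H_X(j)=ja+1+\delta_j$ by definition.

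\textbf{Step 1: orthogonality.} If $f$ has degree $i$ and $g$ has degree $m-i$, then $fg$ has degree $m$. Evaluating at the fixed representatives is multiplicative, so the coordinatewise (Schur) product satisfies $E_i\ast E_{m-i}\subseteq E_m$. For $u\in E_i$ and $v\in E_{m-i}$ this gives
\[
  \langle u,v\rangle_\lambda=\sum_{x\in X}\lambda_x u_xv_x=\lambda\cdot(u\ast v)=0,
\]
because $u\ast v\in E_m=\lambda^\perp$. Hence $E_{m-i}\subseteq E_i^\perp$, where $\perp$ is taken with respect to $\langle-,-\rangle_\lambda$.

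\textbf{Step 2: dimension of the orthogonal complement.} Since every $\lambda_x\neq0$, the bilinear form $\langle-,-\rangle_\lambda$ is nondegenerate on $\K^X$: its Gram matrix is $\operatorname{diag}(\lambda_x)$. Therefore $\dim E_i^\perp=|X|-H_X(i)$. Combined with Step 1, this gives $H_X(m-i)\le |X|-H_X(i)$, which is the inequality $H_X(i)+H_X(m-i)\le|X|$. Substituting $|X|=ma+2+s$, $H_X(i)=ia+1+\delta_i$ and $H_X(m-i)=(m-i)a+1+\delta_{m-i}$, the terms $ma+2$ cancel. What remains is $\delta_i+\delta_{m-i}\le s$.

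\textbf{Step 3: the quotients $D_i$.} Because $E_{m-i}\subseteq E_i^\perp$, the quotient $D_i$ is well defined, and
\[
  \dim D_i=\bigl(|X|-H_X(i)\bigr)-H_X(m-i)=s-\delta_i-\delta_{m-i}
\]
by the same substitution. Summing over $0\le i\le m$, each $\delta_j$ appears twice, once as $\delta_i$ and once as $\delta_{m-i}$. This yields $\sum_i\dim D_i=(m+1)s-2\sum_i\delta_i$. The final inequality $\le(m+1)s$ uses $\delta_i\ge0$, which is part of Theorem~\ref{thm:hilbert-stability}.

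None of these steps is hard. The only point requiring care is Step 1: the identification $E_m=\lambda^\perp$ and the products in $E_i\ast E_{m-i}$ must be computed with the same fixed representatives. Rescaling a representative changes $E_j$ by a diagonal scaling that depends on $j$, so mixing choices would break the multiplicativity of evaluation. I will therefore fix the representatives once, at the start, and use them throughout.
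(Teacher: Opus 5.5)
Your proposal is correct and follows the paper's proof essentially step for step. Both derive $E_{m-i}\subseteq E_i^\perp$ from $E_iE_{m-i}\subseteq E_m=\lambda^\perp$, use nondegeneracy of $\langle-,-\rangle_\lambda$ to get $\dim E_i^\perp=|X|-H_X(i)$, and then sum. You also make explicit two points the paper leaves implicit: the appeal to $\delta_i\geq0$ for the final inequality, and the need to fix one set of representatives throughout.
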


When the surplus is one, the monotonicity and duality constraints determine the first difference completely.

\begin{corollary}\label{cor:s-one}
Under the hypotheses of Theorem~\ref{thm:hilbert-stability}, assume $m\geq2$ and $s=1$, equivalently
\(
  |X|=ma+3.
\)
Then there exists an integer $k$ with $k>m/2$ such that
\[
  \delta_j=
  \begin{cases}
  0,&j<k,\\
  1,&j\geq k,
  \end{cases}
\]
for $0\leq j\leq m$.  Equivalently,
\[
  \Delta H_X=(1,a,\ldots,a,a+1,a,\ldots,a,1),
\]
with the unique surplus $a+1$ occurring in degree $k>m/2$.
\end{corollary}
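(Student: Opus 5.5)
We combine the monotonicity statement of Theorem~\ref{thm:hilbert-stability} with the duality bound of Theorem~\ref{thm:duality-stability}.

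\textbf{Step 1: the shape of $(\delta_j)$.} Since $s=1$, Theorem~\ref{thm:hilbert-stability} gives the nondecreasing chain of nonnegative integers
\[
  0=\delta_0=\delta_1\le\delta_2\le\cdots\le\delta_m=1 .
\]
Each $\delta_j$ therefore lies in $\{0,1\}$. Let $k$ be the smallest index with $\delta_k=1$. Such a $k$ exists because $\delta_m=1$. We have $k\ge2$ because $\delta_1=0$, and $k\le m$. Monotonicity then gives $\delta_j=0$ for $j<k$ and $\delta_j=1$ for $j\ge k$. This is the displayed step function.

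\textbf{Step 2: the inequality $k>m/2$.} Apply Theorem~\ref{thm:duality-stability} with $i=k$, which is allowed since $0\le k\le m$. It gives $\delta_k+\delta_{m-k}\le s=1$. Since $\delta_k=1$, this forces $\delta_{m-k}=0$. By the step-function description, $m-k<k$, that is, $k>m/2$.

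\textbf{Step 3: the first differences.} By definition $H_X(j)=ja+1+\delta_j$ for $0\le j\le m$. Hence $\Delta H_X(0)=1$ and $\Delta H_X(j)=a+(\delta_j-\delta_{j-1})$ for $1\le j\le m$. This equals $a+1$ exactly at $j=k$ and equals $a$ for every other $j$ in $1\le j\le m$.

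Finally, $\nu_m(X)$ is a circuit, so $E_m$ is a hyperplane in $\K^X$ and $H_X(m)=|X|-1$. The next difference $\Delta H_X(m+1)$ is therefore $1$, and all later differences vanish. This matches the form stated in Theorem~\ref{thm:hilbert-stability}, and it gives $\Delta H_X=(1,a,\ldots,a,a+1,a,\ldots,a,1)$ with the surplus in degree $k$.

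The only non-formal input is the duality inequality in Step 2. Without it, monotonicity alone would allow $k$ to be any index from $2$ to $m$.
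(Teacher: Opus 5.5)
Your proposal is correct and follows the paper's proof. Monotonicity from Theorem~\ref{thm:hilbert-stability} forces a single jump of $\delta$ at some $k$, and the duality inequality $\delta_k+\delta_{m-k}\le 1$ from Theorem~\ref{thm:duality-stability} at $i=k$ forces $m-k<k$; the paper phrases this as a contradiction when $2k\le m$, and your Step~3 spells out the first-difference form, where the final entry $1$ comes from $H_X(m)=|X|-1$ via Lemma~\ref{lem:no-plateau}, as already recorded in Theorem~\ref{thm:hilbert-stability}.
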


\section{Veronese circuits and sharpness}\label{sec:basic-sharpness}

We first record a base-change fact that will be used both in the sharpness discussion and later in the large-characteristic form of the theorem.

\begin{lemma}\label{lem:base-change-circuits}
Let $E/\K$ be a field extension, and let $v_1,\ldots,v_t$ be vectors in a finite-dimensional $\K$-vector space $V$.  Then the rank of the family over $\K$ is equal to the rank of the scalar-extended family in $V\otimes_\K E$ over $E$.  Consequently, linear independence, linear dependence, and the circuit property are preserved by field extension.
\end{lemma}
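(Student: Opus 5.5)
The plan is to reduce everything to the invariance of matrix rank under field extension, using the fact that rank is detected by the vanishing or non-vanishing of minors. First fix a $\K$-basis $e_1,\ldots,e_n$ of $V$. Then $e_1\otimes1,\ldots,e_n\otimes1$ is an $E$-basis of $V\otimes_\K E$; this is the standard behaviour of tensor products with free modules, since $V\otimes_\K E\cong E^n$ compatibly with the coordinates. Write $v_i=\sum_j a_{ij}e_j$ with $a_{ij}\in\K$. Then $v_i\otimes 1=\sum_j a_{ij}(e_j\otimes1)$, so both families are represented by the same $t\times n$ matrix $A=(a_{ij})$ with entries in $\K$.

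The rank of the family over $\K$ is the rank of $A$ as a matrix over $\K$. The rank of the extended family over $E$ is the rank of $A$ viewed as a matrix over $E$. I will use the characterization of rank as the largest $r$ such that some $r\times r$ minor of $A$ is nonzero. A minor is a polynomial expression in the entries $a_{ij}$, so it is the same element of $\K\subseteq E$ whichever field we regard it in, and being nonzero does not depend on that choice. Hence the two ranks agree. Alternatively, Gaussian elimination performed over $\K$ is also a valid elimination over $E$, giving the same echelon form.

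For the consequences, apply the rank equality to every subfamily $\{v_i\}_{i\in S}$ with $S\subseteq[t]$. Subfamilies commute with base change, since the extended subfamily is just $\{v_i\otimes1\}_{i\in S}$. A family is independent exactly when its rank equals its size, and dependent otherwise. So independence and dependence of each subfamily are preserved, and therefore so is the circuit property: the whole family is dependent and every proper subfamily is independent.

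I do not expect a genuine obstacle here. The only point needing care is the identification of $V\otimes_\K E$ with $E^n$ via the base $e_j\otimes1$, so that coordinates are preserved, and this is standard.
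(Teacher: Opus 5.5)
Your proposal is correct and follows essentially the same route as the paper: represent the family by a matrix over $\K$ in a fixed basis, use that rank is the largest size of a nonzero minor (and a minor with entries in $\K$ vanishes over $E$ exactly when it vanishes over $\K$), then apply this to every subfamily. The only differences are cosmetic: you use rows rather than columns, and you spell out that $e_j\otimes 1$ is an $E$-basis of $V\otimes_\K E$, which the paper leaves implicit.
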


\begin{proof}
Choose a basis of $V$ and form the corresponding matrix with columns $v_i$.  The rank is the largest size of a nonzero minor.  Since a minor with entries in $\K$ is zero over $E$ if and only if it is zero over $\K$, the rank is unchanged after scalar extension.  The statements about independence, dependence, and circuits follow by applying this rank equality to all subfamilies.
\end{proof}

\subsection{The projective formulation}

\begin{proof}[Proof of Theorem~\ref{thm:veronese-circuit-form}]
Choose representatives $p_i=[\ell_i]$.  Then
\[
  d+1=\dim\Span\{\ell_1,\ldots,\ell_t\}.
\]
Theorem~\ref{thm:main} gives
\[
  d+1\leq \frac{t+m-2}{m},
\]
which is equivalent to $t\geq md+2$.
\end{proof}

\subsection{Rational-normal-curve extremal examples}

We now show, over any field $\K$ of characteristic zero, that equality in the affine bound of Theorem~\ref{thm:main} is attained for infinitely many parameter values.  In particular, the leading constant $1/m$ cannot be improved.

Fix $r\geq1$.  If $r=1$, the equality example is the two-form family $\ell_1=\ell_2=x_0$.  Thus assume $r\geq2$ and put
\[
  M=m(r-1),
  \qquad
  t=M+2=m(r-1)+2.
\]
Choose pairwise distinct scalars $a_1,\ldots,a_t\in\K$.  This is possible because every field of characteristic zero is infinite.  In variables $x_0,\ldots,x_{r-1}$, define
\[
  \ell_i=x_0+a_i x_1+a_i^2 x_2+\cdots+a_i^{r-1} x_{r-1}.
\]
Then $\Span\{\ell_1,\ldots,\ell_t\}$ has dimension $r$, because the coefficient matrix is Vandermonde of rank $r$.

Now consider $\ell_i^m$.  The coefficient of $x_0^{\alpha_0}x_1^{\alpha_1}\cdots x_{r-1}^{\alpha_{r-1}}$ in $\ell_i^m$ is
\[
  \binom{m}{\alpha_0,\ldots,\alpha_{r-1}}
  a_i^{\alpha_1+2\alpha_2+\cdots+(r-1)\alpha_{r-1}}.
\]
Thus each coefficient of $\ell_i^m$, as a function of $a_i$, is a scalar multiple of a monomial $a_i^s$ with $0\leq s\leq m(r-1)=M$.  Hence the coefficient row space of $\ell_1^m,\ldots,\ell_t^m$ is contained in the $(M+1)$-dimensional space spanned by
\[
  (1,\ldots,1),\quad (a_i)_{i=1}^t,\quad (a_i^2)_{i=1}^t,\quad \ldots,\quad (a_i^M)_{i=1}^t.
\]
Since $t=M+2$, the $t$ forms $\ell_1^m,\ldots,\ell_t^m$ are linearly dependent.

It remains to check minimality.  For every integer $0\leq s\leq M$, one can write $s=j_1+\cdots+j_m$ with $0\leq j_\nu\leq r-1$.  Indeed, write $s=q(r-1)+b$ with $0\leq b<r-1$.  If $b=0$, use $q$ summands equal to $r-1$ and the remaining summands equal to $0$.  If $b>0$, then $q\leq m-1$ because $s\leq m(r-1)$; use $q$ summands equal to $r-1$, one summand equal to $b$, and the remaining summands equal to $0$.
Taking $\alpha_j=\#\{\nu:j_\nu=j\}$ gives a monomial of total degree $m$ whose coefficient row is a nonzero scalar multiple of $(a_i^s)_{i=1}^t$.  This scalar is nonzero because $\operatorname{char}\K=0$.  Therefore the coefficient matrix contains the full Vandermonde block
\(
  (a_i^s)_{0\leq s\leq M,\ 1\leq i\leq t}.
\)
Thus any choice of $M+1=t-1$ columns gives a square Vandermonde matrix with distinct parameters $a_i$, and hence is nonsingular.  Hence any proper subfamily of size $t-1$ among $\ell_1^m,\ldots,\ell_t^m$ is linearly independent.  A fortiori every smaller proper subfamily is linearly independent.

Thus we have constructed a minimally dependent family with $\dim L=r$ and $t=m(r-1)+2$.  Consequently,
\[
  r=\frac{t+m-2}{m},
\]
so equality holds in Theorem~\ref{thm:main} for infinitely many values of $t$.  Since equality holds for $t=m(r-1)+2$, the affine bound \((t+m-2)/m\) cannot be decreased on this infinite congruence class.  In particular, the right-hand side of Theorem~\ref{thm:main} cannot be lowered by any fixed positive constant, and no leading constant smaller than $1/m$ can hold in general.

\begin{corollary}\label{cor:constant}
The optimal leading constant in Theorem~\ref{thm:main} is $c_m=1/m$.
\end{corollary}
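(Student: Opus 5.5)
The corollary has two halves: the upper bound $c_m\le 1/m$ and the lower bound $c_m\ge 1/m$. I read ``optimal leading constant'' as the infimum of all $c$ such that $\dim L\le ct+O_m(1)$ holds for every Veronese circuit of length $t$ in characteristic zero. The plan is to prove each half separately and then combine them.

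For the upper bound, I will invoke Theorem~\ref{thm:main} directly. It gives
\[
  \dim L\le \frac{t+m-2}{m}=\frac{t}{m}+\frac{m-2}{m}.
\]
This is an estimate of the required shape with $c=1/m$ and additive constant $(m-2)/m\le 1$. Hence the infimum of admissible constants is at most $1/m$.

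For the lower bound, suppose some $c<1/m$ and some constant $C$ satisfy $\dim L\le ct+C$ for all circuits. I will use the rational-normal-curve family constructed just above. For each $r\ge1$ it gives a circuit with $t=m(r-1)+2$ and $\dim L=r$. Then $r=(t+m-2)/m$, so the assumed bound reads
\[
  \frac{t+m-2}{m}\le ct+C,
  \qquad\text{that is,}\qquad
  \Bigl(\frac1m-c\Bigr)t\le C-\frac{m-2}{m}.
\]
Since $t=m(r-1)+2\to\infty$ as $r\to\infty$, the left side is unbounded while the right side is fixed. This is a contradiction, so no $c<1/m$ is admissible.

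Combining the two halves gives $c_m=1/m$. The only point needing care is the case $m=1$: there Theorem~\ref{thm:main} reads $\dim L\le t-1$, and the same family, with $t=r+1$, still applies. I expect no genuine obstacle, because all the content sits in Theorem~\ref{thm:main} and the sharpness construction. The main thing to get right is stating precisely what ``leading constant'' means, namely as an asymptotic statement with an additive $O_m(1)$ error.
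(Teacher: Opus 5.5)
Your proposal is correct and follows the paper's own proof. Admissibility of $1/m$ comes directly from Theorem~\ref{thm:main}, and optimality comes from the rational-normal-curve circuits with $t=m(r-1)+2$ and $\dim L=r$. The paper phrases the second half as $\dim L/t=r/(m(r-1)+2)\to 1/m$, whereas you derive the contradiction with the additive constant explicitly, which is the same argument stated a little more precisely.
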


\begin{proof}
Theorem~\ref{thm:main} gives a bound of the form
\[
  \dim L\leq \frac{1}{m}t+O_m(1),
\]
so $c_m=1/m$ is admissible.  Conversely, the rational-normal-curve construction above gives minimally dependent families with $t=m(r-1)+2$ and $\dim L=r$.  Hence
\[
  \frac{\dim L}{t}=\frac{r}{m(r-1)+2}\longrightarrow \frac1m
  \qquad\text{as }r\to\infty.
\]
No smaller leading constant can therefore hold for all such families.
\end{proof}

\section{The Schur-product Kneser input}\label{sec:kneser}

Throughout this section, $\K$ is an infinite field.  For vectors in $\K^t$, multiplication is coordinatewise.  If $S,T\subseteq \K^t$ are linear subspaces, their product is
\[
  ST=\Span\{s*t:s\in S,\ t\in T\}.
\]
For a subspace $V\subseteq \K^t$, define
\[
  \St(V)=\{u\in\K^t:uV\subseteq V\}.
\]
The scalar line $\K\one$, where $\one=(1,\ldots,1)$, is always contained in $\St(V)$.  We say that $V\subseteq\K^t$ has \emph{full support} if, for every coordinate $i$, some vector in $V$ has nonzero $i$-th coordinate.  A vector $x\in\K^t$ is \emph{invertible} if all its coordinates are nonzero.  Let $V^\times$ denote the set of invertible vectors in $V$.

The result below is the trivial-stabilizer case of the Schur-product Kneser theorem of Mirandola and Z\'emor \cite[Theorem~3.3]{MirandolaZemor}, who prove the more general inequality
\[
  \dim ST\geq \dim S+\dim T-\dim\St(ST)
\]
for coordinatewise products of nonzero subspaces of $\K^t$.  We include a proof of the special case needed here.

\begin{lemma}\label{lem:invertible-basis}
If $V\subseteq\K^t$ has full support, then $V$ has a basis consisting of invertible vectors.
\end{lemma}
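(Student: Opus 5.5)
The plan is to show that invertible vectors span $V$; any spanning set contains a basis, so this is enough. The argument has three steps.

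First, $V^\times$ is nonempty. Fix a basis $v_1,\ldots,v_k$ of $V$ and consider the polynomial
\[
  P(c_1,\ldots,c_k)=\prod_{i=1}^t\Bigl(\sum_{j=1}^k c_j (v_j)_i\Bigr)\in\K[c_1,\ldots,c_k].
\]
Each factor is a nonzero linear form, because full support gives some $j$ with $(v_j)_i\neq0$. Hence $P$ is a nonzero polynomial. Since $\K$ is infinite, there is a point $c\in\K^k$ with $P(c)\neq0$, and then $\sum_j c_jv_j$ is invertible.

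Second, fix $x\in V^\times$ and show that every $v\in V$ is a combination of invertible vectors. Consider $x+\mu v$ for $\mu\in\K$. Its $i$-th coordinate $x_i+\mu v_i$ vanishes for at most one value of $\mu$, since $x_i\neq0$. So at most $t$ values of $\mu$ are bad, and because $\K$ is infinite we can choose $\mu\neq0$ with $x+\mu v\in V^\times$. Then
\[
  v=\mu^{-1}\bigl((x+\mu v)-x\bigr)
\]
lies in $\Span V^\times$.

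Third, since $V^\times$ spans $V$, extract a basis of $V$ from it. This finishes the proof.

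The only delicate point is the use of infiniteness of $\K$, which the section assumes throughout; over a finite field the statement can fail. The simpler second step also avoids the polynomial argument except to produce the single vector $x$.
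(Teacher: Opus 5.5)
Your proof is correct, but it takes a different route from the paper's.

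The paper uses a single one-parameter family, the moment curve $y_\alpha=v_1+\alpha v_2+\cdots+\alpha^{k-1}v_k$. Each coordinate of $y_\alpha$ is a nonzero univariate polynomial in $\alpha$, so all but finitely many $y_\alpha$ are invertible. Any $k$ of them with distinct parameters are linearly independent, because the change-of-basis matrix is Vandermonde. You instead produce one invertible vector from the nonvanishing of a multivariate product of nonzero linear forms, then use the lines $x+\mu v$ to show that $V^\times$ spans $V$, and extract a basis at the end.

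Your route isolates a clean general fact: once $V^\times\neq\emptyset$, it spans $V$. Its second step is entirely elementary. The paper's route avoids the multivariate nonvanishing lemma. More importantly, it yields an infinite family of invertible vectors in which \emph{every} $k$ members with distinct parameters form a basis. That general-position property is what gets reused in the proof of Theorem~\ref{thm:schur-kneser}. There, infinitely many $y_\alpha$ are pigeonholed into finitely many subalgebras, and one needs $k$ vectors in a single class that still form a basis of $S$.

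From your argument alone, that later step would need an extra observation. For example: over an infinite field, $S^\times$ cannot be covered by finitely many proper subspaces, so some class spans $S$.
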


\begin{proof}
Choose a basis $v_1,\ldots,v_k$ of $V$.  For $\alpha\in\K$, put
\[
  y_\alpha=v_1+\alpha v_2+\cdots+\alpha^{k-1}v_k.
\]
For every coordinate $j$, the $j$-th coordinate of $y_\alpha$ is a polynomial in $\alpha$.  Since $V$ has full support, this polynomial is not identically zero.  The bad values of $\alpha$ are therefore contained in a finite union of proper zero sets of nonzero univariate polynomials.  Because $\K$ is infinite, there are infinitely many $\alpha$ for which $y_\alpha$ is invertible.  Choosing $k$ distinct such values of $\alpha$ gives a basis, since the change-of-basis matrix is Vandermonde.
\end{proof}

\begin{lemma}\label{lem:finitely-many-subalgebras}
There are only finitely many unital subalgebras of $\K^t$ under coordinatewise multiplication.
\end{lemma}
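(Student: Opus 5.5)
The plan is to classify unital subalgebras of $\K^t$ completely: each one is determined by a set partition of the coordinates $[t]$. Since $[t]$ has only finitely many partitions (the Bell number $B_t$), finiteness follows at once. For a partition $\mathcal P$ of $[t]$ into blocks $B_1,\ldots,B_k$, let $A_{\mathcal P}$ be the set of vectors that are constant on every block. This is clearly a unital subalgebra, spanned by the block indicators $\one_{B_1},\ldots,\one_{B_k}$. The main claim to prove is that every unital subalgebra $A\subseteq\K^t$ has the form $A_{\mathcal P}$.

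To prove the claim, fix a unital subalgebra $A$. Define $i\sim j$ if $a_i=a_j$ for every $a\in A$; this is an equivalence relation with some partition $\mathcal P$, and by construction $A\subseteq A_{\mathcal P}$. For the reverse inclusion it suffices to show that each block indicator $\one_B$ lies in $A$. Fix a block $B$ and an index $i\in B$.

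1. For each $j\notin B$ there is some $a\in A$ with $a_i\neq a_j$.
2. Since $\one\in A$, the vector $e^{(j)}=(a-a_j\one)/(a_i-a_j)$ lies in $A$. It has $i$-th coordinate $1$ and $j$-th coordinate $0$. It also takes the value $1$ on all of $B$, because every element of $A$ is constant on $B$.
3. The coordinatewise product of the $e^{(j)}$ over all $j\notin B$ lies in $A$, since $A$ is closed under multiplication. This product is $1$ on $B$ and $0$ off $B$, so it equals $\one_B$. If $B=[t]$, use $\one_B=\one$ directly.

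Hence $A_{\mathcal P}\subseteq A$, so $A=A_{\mathcal P}$, and the map $\mathcal P\mapsto A_{\mathcal P}$ from partitions onto unital subalgebras is surjective. Since $[t]$ has only finitely many partitions, there are only finitely many unital subalgebras.

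The only point needing care is step 3: the interpolating vectors must equal $1$ on the whole block $B$, not merely at the index $i$. This is guaranteed by the definition of $\sim$, since every element of $A$ is constant on $B$. Everything else is routine, and the argument works over any field, so the standing assumption that $\K$ is infinite is not used here.
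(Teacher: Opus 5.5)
Your proposal is correct and follows essentially the same route as the paper: you partition $[t]$ by the relation ``all elements of $A$ agree on $i$ and $j$'', then obtain each block indicator $\one_B$ as a coordinatewise product of normalized vectors $(a-a_j\one)/(a_i-a_j)$. The only difference is cosmetic: you take one factor for each index $j\notin B$, whereas the paper takes one for each other block $B'$.
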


\begin{proof}
Let $H\subseteq\K^t$ be a unital subalgebra.  Define an equivalence relation on $\{1,\ldots,t\}$ by
\[
  i\sim j
  \quad\Longleftrightarrow\quad
  h_i=h_j\text{ for every }h\in H.
\]
Then $H$ is contained in the space of vectors that are constant on the blocks of this partition.

Conversely, fix a block $B$ of the partition.  For each other block $B'$, by definition of the partition there is some $h_{B'}\in H$ whose value on $B$ differs from its value on $B'$.  Here $h(B)$ denotes the common value of $h$ on the block $B$.  The vector
\[
  \prod_{B'\neq B}
  \frac{h_{B'}-h_{B'}(B')\one}{h_{B'}(B)-h_{B'}(B')}
\]
belongs to $H$, is equal to $1$ on $B$, and is equal to $0$ on every other block.  Thus $H$ is exactly the algebra of vectors constant on the blocks of its partition.  Since there are only finitely many partitions of $\{1,\ldots,t\}$, there are only finitely many such algebras.
\end{proof}

\begin{lemma}\label{lem:e-transform}
Let $S,T\subseteq\K^t$ be nonzero subspaces.  Suppose that $T$ has a basis of invertible vectors.  For every $x\in S^\times$ there exist a unital subalgebra $H_x\subseteq\K^t$ and a subspace $V_x\subseteq\K^t$ such that
\[
  H_xV_x=V_x,
  \qquad
  xT\subseteq V_x\subseteq ST,
\]
and
\[
  \dim V_x+\dim H_x\geq \dim S+\dim T.
\]
\end{lemma}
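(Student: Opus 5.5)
We would prove the statement by induction on $\dim S$, using an $e$-transform in the spirit of the Dyson/Kneser argument from additive combinatorics.

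\emph{Normalization.} Fix $x\in S^\times$. Multiplication by the invertible vector $x^{-1}$ is a linear automorphism of $\K^t$, and it commutes with coordinatewise products. Put $S'=x^{-1}S$. Then $1\in S'$, and $S'T=x^{-1}(ST)$. If we find $H$ and $V'$ with
\[
H V'=V',\qquad T\subseteq V'\subseteq S'T,\qquad \dim V'+\dim H\geq \dim S'+\dim T,
\]
then $V_x=xV'$ and $H_x=H$ work for $S$, because $H(xV')=x(HV')=xV'$. So we may assume throughout that $x=\one\in S$. The goal is then a unital subalgebra $H$ and an $H$-stable $V$ with $T\subseteq V\subseteq ST$ and $\dim V+\dim H\geq\dim S+\dim T$.

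\emph{Terminal case.} Suppose $yS\subseteq T$ for every $y$ in a fixed basis of $T$ consisting of invertible vectors; this basis exists by hypothesis. Then $ST\subseteq T$, and since $\one\in S$ also $T\subseteq ST$, so $ST=T$. Take $V=T$ and $H=\St(T)$. This $H$ is a unital subalgebra with $HT=T$, and it contains $S$ because $ST\subseteq T$. Hence $\dim V+\dim H\geq \dim T+\dim S$. The base case $\dim S=1$, i.e. $S=\K\one$, is an instance of this case.

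\emph{The $e$-transform step.} Otherwise there is an invertible basis vector $y\in T$ with $yS\not\subseteq T$. Define
\[
S_1=S\cap y^{-1}T,\qquad T_1=T+yS.
\]
We then check four properties.
\begin{enumerate}[label=\textup{(\alph*)}]
\item $\one\in S_1$, because $y\in T$. Also $S_1\subsetneq S$, because $yS\not\subseteq T$. So $\dim S_1<\dim S$ and $\one\in S_1^\times$.
\item $T_1\supseteq T$. Hence $T_1$ still has a basis of invertible vectors: extend the invertible basis of $T$ by invertible vectors of the form $ys$ with $s\in S^\times$; this needs a short check, or else we use that $T_1$ has full support together with Lemma~\ref{lem:invertible-basis}.
\item $S_1T_1\subseteq ST$. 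Indeed $S_1T\subseteq ST$, and $S_1\cdot yS\subseteq y^{-1}T\cdot yS=TS$.
\item The dimensions are preserved. Multiplication by $y$ is injective, so $\dim(yS\cap T)=\dim(S\cap y^{-1}T)=\dim S_1$. The dimension formula for $T+yS$ then gives $\dim S_1+\dim T_1=\dim S+\dim T$.
\end{enumerate}

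\emph{Conclusion.} Apply the induction hypothesis to $(S_1,T_1)$ with the point $\one\in S_1^\times$. It yields $H$ and $V$ with $HV=V$ and
\[
T\subseteq T_1\subseteq V\subseteq S_1T_1\subseteq ST,
\]
together with $\dim V+\dim H\geq \dim S_1+\dim T_1=\dim S+\dim T$. This completes the induction.

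\emph{Expected obstacle.} The only delicate points are the bookkeeping in the transform step: the containment $S_1T_1\subseteq ST$, the exact dimension identity, and making sure the hypotheses (an invertible basis for $T_1$ and $\one\in S_1$) persist. The choice $y\in T$ invertible is exactly what makes all of these work.
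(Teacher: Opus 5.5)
Your proof is correct and follows the paper's argument essentially step for step: the same normalization to $x=\one$, the same transform $S\cap y^{-1}T$, $T+yS$ with the same dimension identity and containment in $ST$, the same invertible-basis argument for $T+yS$, and induction on $\dim S$. The only differences are cosmetic. You detect the terminal case using a fixed invertible basis of $T$ rather than all of $T^\times$, and you take $H=\St(T)$ rather than the unital subalgebra generated by $S$; both choices work equally well.
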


\begin{proof}
First reduce to the case $x=\one\in S$.  Put $S'=x^{-1}S$.  If the case $\one\in S'$ gives a unital subalgebra $H$ and a subspace $V'$ with
\[
  HV'=V',
  \qquad
  T\subseteq V'\subseteq S'T,
\]
and
\[
  \dim V'+\dim H\geq \dim S+\dim T,
\]
then $V_x=xV'$ satisfies
\[
  HV_x=V_x,
  \qquad
  xT\subseteq V_x\subseteq ST,
  \qquad
  \dim V_x+\dim H\geq \dim S+\dim T.
\]
Thus it suffices to prove the case $x=\one\in S$.

We prove the case $x=\one$ with $\one\in S$ by induction on $k=\dim S$.  If $k=1$, then $S=\K\one$.  Take $H_x=\K\one$ and $V_x=T$.

Assume $k>1$.  For an invertible vector $e\in T$, define
\[
  S(e)=S\cap Te^{-1},
  \qquad
  T(e)=T+Se.
\]
Since $\one\in S$ and $e\in T$, we have $\one\in S(e)$, so $S(e)\neq0$.  To see the dimension formula, consider the linear map
\[
  \phi_e\colon S\to\K^t/T,
  \qquad
  s\mapsto se+T.
\]
Then $\ker\phi_e=S\cap Te^{-1}=S(e)$, and $T(e)/T\simeq\operatorname{im}\phi_e$.  Hence
\[
  \dim T(e)=\dim T+\dim S-\dim S(e),
\]
or equivalently $\dim S(e)+\dim T(e)=\dim S+\dim T$.  Also $S(e)T(e)\subseteq ST$.  Indeed, $S(e)T\subseteq ST$, and if $a\in S(e)$, then $a=be^{-1}$ for some $b\in T$, so for $s\in S$ we have $a(se)=bs\in ST$.

If $S(e)=S$ for every $e\in T^\times$, then $S\subseteq Te^{-1}$, and therefore $Se\subseteq T$ for every invertible $e\in T$.  Since the invertible vectors in $T$ span $T$, it follows that $ST\subseteq T$.  As $\one\in S$, also $T\subseteq ST$, hence $ST=T$.  Let $H$ be the unital subalgebra generated by $S$, and let $V=T$.  Since $ST=T$, repeated multiplication by elements of $S$ preserves $T$, and hence $HV=V$.  Also $T\subseteq V\subseteq ST$, and
\[
  \dim V+\dim H\geq \dim T+\dim S.
\]

Otherwise, choose $e\in T^\times$ with $0<\dim S(e)<\dim S$.  Since $T$ has a basis of invertible vectors, $T$ has full support.  As $T\subseteq T(e)$, the space $T(e)$ also has full support, and by Lemma~\ref{lem:invertible-basis} it has a basis of invertible vectors.  Applying the induction hypothesis to the pair $S(e),T(e)$ yields a unital subalgebra $H$ and a subspace $V$ with
\[
  HV=V,
  \qquad
  T\subseteq T(e)\subseteq V\subseteq S(e)T(e)\subseteq ST,
\]
and
\[
  \dim V+\dim H\geq \dim S(e)+\dim T(e)=\dim S+\dim T.
\]
This completes the induction.
\end{proof}

\begin{theorem}[Schur-product Kneser input]\label{thm:schur-kneser}
Let $\K$ be an infinite field.  Let $S,T\subseteq\K^t$ be nonzero subspaces.  If $\St(ST)=\K\one$, then $\dim ST\geq \dim S+\dim T-1$.
\end{theorem}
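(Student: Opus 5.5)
The plan is to derive the statement from Lemma~\ref{lem:e-transform}. To do that we need three things: invertible bases, a vector $x\in S^\times$, and some way to turn the local stabilizers $H_x$ into information about $\St(ST)$ itself. The finiteness statement of Lemma~\ref{lem:finitely-many-subalgebras} supplies the last of these.

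\emph{Step 1: full support.} The case $t=1$ is trivial, since then both sides are at most $1$. So assume $t\geq2$. Suppose $ST$ vanishes identically in some coordinate $i$, and let $u$ be the $i$-th standard basis vector. Then $u\,ST=0\subseteq ST$, so $u\in\St(ST)$. But $u\notin\K\one$, contradicting the hypothesis. Hence $ST$ has full support. Every vector of $ST$ is a sum of products $s*t$, so a coordinate on which $ST$ is nonzero must also be a coordinate on which both $S$ and $T$ are nonzero. Therefore $S$ and $T$ both have full support. By Lemma~\ref{lem:invertible-basis}, each of them has a basis of invertible vectors; in particular $S^\times\neq\emptyset$.

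\emph{Step 2: apply the transform for every $x$.} For each $x\in S^\times$, Lemma~\ref{lem:e-transform} gives a unital subalgebra $H_x$ and a subspace $V_x$ such that
\[
  H_xV_x=V_x,\qquad xT\subseteq V_x\subseteq ST,\qquad \dim V_x+\dim H_x\geq\dim S+\dim T.
\]
By Lemma~\ref{lem:finitely-many-subalgebras}, the map $x\mapsto H_x$ takes only finitely many values, so $S^\times$ splits into finitely many classes $X_H=\{x\in S^\times:H_x=H\}$.

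\emph{Step 3: some class spans $S$ (the main obstacle).} This is the crux: we must find one subalgebra $H$ whose class $X_H$ spans $S$. The vectors of $S$ that are not invertible lie in the union of the subspaces $S\cap\{v_i=0\}$. Each of these is proper, because $S$ has full support. Now suppose that for every $H$ the span of $X_H$ were a proper subspace of $S$. Then $S$ would be a finite union of proper subspaces. This is impossible over an infinite field, so some class $X_H$ does span $S$.

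\emph{Step 4: conclude.} Fix such an $H$ and put $V=\sum_{x\in X_H}V_x$. Each summand is $H$-stable, so $HV=V$. Also $V\subseteq ST$, and $V\supseteq\sum_{x\in X_H}xT=ST$, because $X_H$ spans $S$. Hence $V=ST$. It follows that $H\subseteq\St(ST)=\K\one$, and since $H$ is unital, $\dim H=1$. Picking any $x\in X_H$, we obtain
\[
  \dim ST\geq\dim V_x\geq\dim S+\dim T-\dim H=\dim S+\dim T-1,
\]
which is the claimed inequality.
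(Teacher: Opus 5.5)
Your proof is correct and follows the paper's argument. You get full support, apply Lemma~\ref{lem:e-transform} at invertible $x\in S$, use the finiteness of unital subalgebras to find a spanning family of such $x$ sharing one $H$, and conclude from $H\subseteq\St(ST)=\K\one$. The only difference is in producing that family: you use the fact that a vector space over an infinite field is not a finite union of proper subspaces, whereas the paper pigeonholes along the moment curve $y_\alpha$ and uses a Vandermonde determinant, two interchangeable ways of doing the same step.
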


\begin{proof}
Since $\St(ST)=\K\one$, the product $ST$ has full support; otherwise arbitrary values on the unused coordinates would give a larger stabilizer.  Hence both $S$ and $T$ have full support.  By Lemma~\ref{lem:invertible-basis}, $T$ has a basis of invertible vectors, and $S$ has infinitely many invertible vectors.

Let $k=\dim S$.  Choose a basis $s_1,\ldots,s_k$ of $S$ and put
\[
  y_\alpha=s_1+\alpha s_2+\cdots+\alpha^{k-1}s_k.
\]
As in Lemma~\ref{lem:invertible-basis}, there are infinitely many $\alpha$ such that $y_\alpha$ is invertible.  For each such $y_\alpha$, Lemma~\ref{lem:e-transform} gives a unital subalgebra $H_{y_\alpha}$ and a space $V_{y_\alpha}$.

By Lemma~\ref{lem:finitely-many-subalgebras}, there are only finitely many possible subalgebras.  Hence there exist $k$ distinct scalars $\alpha_1,\ldots,\alpha_k$ such that $y_{\alpha_1},\ldots,y_{\alpha_k}$ are invertible, form a basis of $S$, and have the same associated algebra $H$.  Write $x_i=y_{\alpha_i}$ and $V_i=V_{x_i}$.

Since $x_1,\ldots,x_k$ is a basis of $S$, we have $ST=x_1T+\cdots+x_kT$.  As $x_iT\subseteq V_i\subseteq ST$ for every $i$, it follows that $ST=V_1+\cdots+V_k$.  Since each $V_i$ is $H$-stable and $ST=V_1+\cdots+V_k$, the space $ST$ is $H$-stable.  Hence $H\subseteq\St(ST)=\K\one$.

Thus $\dim H=1$.  For any $i$, Lemma~\ref{lem:e-transform} gives
\[
  \dim ST+1\geq \dim V_i+\dim H\geq \dim S+\dim T.
\]
Therefore
\[
  \dim ST\geq \dim S+\dim T-1.
\]
\end{proof}

\section{Proof of the main theorem}\label{sec:proof-main}

We first translate the problem into Schur powers.  Let $r=\dim L$.  Choose a basis $x_1,\ldots,x_r$ for $L$.  Write
\[
  \ell_i=a_{1i} x_1+\cdots+a_{ri} x_r,
  \qquad i=1,\ldots,t.
\]
Let $A=(a_{ji})$ be the corresponding $r\times t$ matrix, and let $C\subseteq\K^t$ be the row space of $A$.  Then $\dim C=r$.

For a subspace $C\subseteq\K^t$, define its Schur powers by
\[
  C^{\langle1\rangle}=C,
  \qquad
  C^{\langle j+1\rangle}=CC^{\langle j\rangle}.
\]
We shall use the elementary identity
\(
  C^{\langle a\rangle}C^{\langle b\rangle}=C^{\langle a+b\rangle},
\)
which follows directly from associativity and commutativity of coordinatewise multiplication.

\begin{lemma}\label{lem:rank-schur-power}
Fix $j\geq1$.  Assume either $\operatorname{char}\K=0$ or $\operatorname{char}\K>j$.  Then the rank of the family $\ell_1^j,\ldots,\ell_t^j$ equals $\dim C^{\langle j\rangle}$.
\end{lemma}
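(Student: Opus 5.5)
The plan is to identify the coefficient matrix of $\ell_1^j,\ldots,\ell_t^j$ with a matrix whose row space is $C^{\langle j\rangle}$, and then use that row rank equals column rank. Expanding $\ell_i^j$ in the monomial basis of degree-$j$ forms in $x_1,\ldots,x_r$ gives, by the multinomial theorem, that the coefficient of $x^\alpha$ (with $|\alpha|=j$) in $\ell_i^j$ is
\[
  \binom{j}{\alpha_1,\ldots,\alpha_r}\,a_{1i}^{\alpha_1}\cdots a_{ri}^{\alpha_r}.
\]
Let $P$ be the matrix with rows indexed by monomials $\alpha$ and columns by $i\in[t]$, holding these coefficients. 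The rank of the family $\ell_1^j,\ldots,\ell_t^j$ is the column rank of $P$, hence equals $\dim$ of its row space $R_j\subseteq\K^t$. So it suffices to show $R_j=C^{\langle j\rangle}$.

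Write $c_1,\ldots,c_r\in\K^t$ for the rows of $A$, so $C=\Span\{c_1,\ldots,c_r\}$. The row of $P$ indexed by $\alpha$ is $\binom{j}{\alpha}\,c_1^{\alpha_1}*\cdots*c_r^{\alpha_r}$, a scalar multiple of a Schur monomial in the $c_k$. By bilinearity of the coordinatewise product and induction on $j$, $C^{\langle j\rangle}$ is spanned by the products $c_{k_1}*\cdots*c_{k_j}$, i.e.\ exactly by the Schur monomials $c^{\alpha}$ with $|\alpha|=j$. This gives the inclusion $R_j\subseteq C^{\langle j\rangle}$ with no assumption on the characteristic.

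For the reverse inclusion, the hypothesis $\operatorname{char}\K=0$ or $\operatorname{char}\K>j$ is used. Each multinomial coefficient $\binom{j}{\alpha}=j!/(\alpha_1!\cdots\alpha_r!)$ is a positive integer dividing $j!$. Since $j!$ is a unit in $\K$ under the hypothesis, this coefficient is nonzero in $\K$. Hence every generator $c^\alpha$ of $C^{\langle j\rangle}$ is a nonzero multiple of a row of $P$, so $C^{\langle j\rangle}\subseteq R_j$.

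The only delicate point is this nonvanishing of multinomial coefficients; everything else is bookkeeping. One should also note that the choice of basis of $L$ is harmless: rank is intrinsic, and the ambient variables can be taken to be $x_1,\ldots,x_r$ after a linear change of coordinates. Such a change maps degree-$j$ forms in $L$ isomorphically into the ambient space of forms, so it preserves the rank of the family.
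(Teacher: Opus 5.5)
Your proposal is correct and follows essentially the same route as the paper: you expand by the multinomial theorem and identify each coefficient row with a multinomial multiple of a Schur monomial in the rows of $A$. You then use multilinearity to see that these monomials span $C^{\langle j\rangle}$, and the characteristic hypothesis to keep the multinomial coefficients nonzero; your remark that they divide $j!$ makes that last step explicit.
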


\begin{proof}
The rank of the family $\ell_1^j,\ldots,\ell_t^j$ equals the dimension of the row space of their coefficient matrix in the monomial basis of homogeneous degree-$j$ polynomials in $x_1,\ldots,x_r$.  Indeed, since $x_1,\ldots,x_r$ are linearly independent linear forms, they may be extended to a coordinate system on the ambient dual space.  In particular, the degree-$j$ monomials in $x_1,\ldots,x_r$ are linearly independent.

Expand
\[
  \ell_i^j=(a_{1i} x_1+\cdots+a_{ri} x_r)^j.
\]
For a multi-index $\alpha=(\alpha_1,\ldots,\alpha_r)$ with $|\alpha|=j$, the coefficient of $x_1^{\alpha_1}\cdots x_r^{\alpha_r}$, as $i$ varies, is
\[
  \binom{j}{\alpha}(a_{1i}^{\alpha_1}\cdots a_{ri}^{\alpha_r})_{i=1}^t,
\]
that is, a nonzero scalar multiple of a coordinatewise product of $j$ rows of $A$.  Conversely, every product of $j$ basis rows of $C$ appears in this way, up to a nonzero multinomial factor.  Since every element of $C$ is a linear combination of the rows of $A$, the coordinatewise product of any $j$ elements of $C$ is, by multilinearity, a linear combination of coordinatewise products of $j$ basis rows of $A$.  Under the stated characteristic hypothesis, these multinomial factors do not vanish.  Hence the coefficient row space of $\ell_1^j,\ldots,\ell_t^j$ is precisely $C^{\langle j\rangle}$.
\end{proof}

For Theorem~\ref{thm:main}, the characteristic-zero hypothesis ensures Lemma~\ref{lem:rank-schur-power} for $j=m$.  The same proof gives the large-characteristic form in Corollary~\ref{cor:large-characteristic-main}.  In small positive characteristic, Section~\ref{sec:product-positive-characteristic} replaces this lemma by a Frobenius-decomposed coefficient-space computation.

Because $\ell_1^m,\ldots,\ell_t^m$ are minimally linearly dependent, their rank is $t-1$.  By Lemma~\ref{lem:rank-schur-power},
\[
  \dim C^{\langle m\rangle}=t-1.
\]
Moreover, the linear relation is unique up to scalars and has full support: if a relation had some zero coefficient, then the remaining proper subfamily of $\ell_1^m,\ldots,\ell_t^m$ would already be linearly dependent, contradicting minimality.  Thus there exist nonzero scalars $\lambda_1,\ldots,\lambda_t\in\K^\times$ such that
\[
  \lambda_1\ell_1^m+\cdots+\lambda_t\ell_t^m=0.
\]
The vector $\lambda=(\lambda_1,\ldots,\lambda_t)$ annihilates the coefficient row space of $\ell_1^m,\ldots,\ell_t^m$, which is $C^{\langle m\rangle}$.  Thus
\[
  C^{\langle m\rangle}
  =\left\{z=(z_1,\ldots,z_t)\in\K^t:\sum_{i=1}^t\lambda_i z_i=0\right\}.
\]
So $C^{\langle m\rangle}$ is a hyperplane whose normal vector has no zero coordinate.  Since every $\lambda_i\neq0$, for each coordinate $j$ one can choose $z$ in this hyperplane with $z_j\neq0$; hence it has full support.

We next record the stabilizer property needed to apply Theorem~\ref{thm:schur-kneser}.

\begin{lemma}\label{lem:hyperplane-stabilizer}
Let
\[
  H=\left\{z\in\K^t:\sum_{i=1}^t\lambda_i z_i=0\right\},
\]
where each $\lambda_i\neq0$.  Then $\St(H)=\K\one$.
\end{lemma}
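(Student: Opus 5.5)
We need to prove $\St(H)=\K\one$ for the hyperplane $H=\lambda^\perp$ with every $\lambda_i\neq0$. The inclusion $\K\one\subseteq\St(H)$ is automatic, since scalar multiples of $\one$ act as scalars. For the reverse inclusion, take $u\in\St(H)$, so $uz\in H$ for every $z\in H$. The plan is to test this condition on a convenient spanning set of $H$ and read off that all coordinates of $u$ coincide.

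If $t=1$, then $H=0$ and $\K^t=\K\one$, so there is nothing to prove; assume $t\geq2$. For each pair of distinct indices $i\neq j$, consider the vector
\[
  z^{(i,j)}=\lambda_i^{-1}e_i-\lambda_j^{-1}e_j,
\]
which lies in $H$ because $\lambda_i\lambda_i^{-1}-\lambda_j\lambda_j^{-1}=0$. Here we use that $\lambda_i,\lambda_j\neq0$. The condition $uz^{(i,j)}\in H$ then reads
\[
  \lambda_i u_i\lambda_i^{-1}-\lambda_j u_j\lambda_j^{-1}=u_i-u_j=0.
\]
Hence $u_i=u_j$ for all $i,j$, so $u\in\K\one$.

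There is no real obstacle; the only point to watch is that the nonvanishing of every $\lambda_i$ is exactly what makes the test vectors $z^{(i,j)}$ available. Without it, a coordinate with $\lambda_i=0$ would allow $u_i$ to be arbitrary, and the stabilizer would be larger.
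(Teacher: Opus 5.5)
Your proof is correct: the vectors $z^{(i,j)}$ lie in $H$ because the $\lambda_i$ are nonzero, and testing the stabilizer condition on them gives $u_i=u_j$ directly. The paper's argument is essentially the same in a more abstract form: it observes that $z\mapsto\sum_i\lambda_iu_iz_i$ vanishes on $H=\ker(z\mapsto\sum_i\lambda_iz_i)$, hence equals $c$ times that functional, so $u_i=c$ for all $i$; your explicit test vectors carry out this step concretely, and your separate treatment of $t=1$ is valid but not needed in the paper's version.
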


\begin{proof}
Let $u=(u_1,\ldots,u_t)\in\St(H)$.  Then for every $z\in H$,
\(
  \sum_{i=1}^t\lambda_i u_i z_i=0.
\)
Thus the linear functional $z\longmapsto\sum_{i=1}^t\lambda_i u_i z_i$ vanishes on the hyperplane
\[
  H=\ker\left(z\longmapsto\sum_{i=1}^t\lambda_i z_i\right).
\]
Since both linear functionals vanish on the same codimension-one subspace $H$, they are scalar multiples of each other.  Hence there exists $c\in\K$ such that $\lambda_i u_i=c\lambda_i$ for all $i$.  Since $\lambda_i\neq0$, we obtain $u_i=c$ for all $i$.  Hence $u\in\K\one$.
\end{proof}

\begin{lemma}\label{lem:all-stabilizers-trivial}
For every $1\leq j\leq m$, $\St(C^{\langle j\rangle})=\K\one$.
\end{lemma}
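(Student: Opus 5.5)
The strategy is to pass stabilizing elements upward through the Schur powers until they reach $C^{\langle m\rangle}$, where Lemma~\ref{lem:hyperplane-stabilizer} pins them down.

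First, I would record the easy monotonicity fact. For subspaces $V,W\subseteq\K^t$ we have $\St(V)\subseteq\St(VW)$. Indeed, if $uV\subseteq V$, then for $v\in V$ and $w\in W$,
\[
  u(v*w)=(uv)*w\in VW,
\]
and $VW$ is spanned by such products. Applying this with $V=C^{\langle j\rangle}$ and $W=C^{\langle m-j\rangle}$, and using the identity $C^{\langle j\rangle}C^{\langle m-j\rangle}=C^{\langle m\rangle}$ for $1\leq j<m$, gives
\[
  \St(C^{\langle j\rangle})\subseteq\St(C^{\langle m\rangle}).
\]
For $j=m$ this inclusion is trivial.

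Next, I would invoke Lemma~\ref{lem:hyperplane-stabilizer}. It applies because $C^{\langle m\rangle}$ is the full-support hyperplane $\lambda^\perp$ with every $\lambda_i\neq0$, as established just before that lemma from the minimality of the dependence. This gives $\St(C^{\langle m\rangle})=\K\one$, so $\St(C^{\langle j\rangle})\subseteq\K\one$. The reverse inclusion $\K\one\subseteq\St(V)$ holds for every subspace $V$. Hence $\St(C^{\langle j\rangle})=\K\one$ for all $1\leq j\leq m$.

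There is essentially no obstacle here. The only point needing care is the product-stabilizer inclusion. It requires noting that $VW$ is the span of the elementwise products and that multiplication by $u$ is linear, so the inclusion extends from the spanning products to all of $VW$. The case $m=1$ is covered directly by Lemma~\ref{lem:hyperplane-stabilizer}.
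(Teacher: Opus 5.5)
Your proposal is correct and follows the paper's proof: stabilizers pass upward via $C^{\langle j\rangle}C^{\langle m-j\rangle}=C^{\langle m\rangle}$, so $\St(C^{\langle j\rangle})\subseteq\St(C^{\langle m\rangle})$, and Lemma~\ref{lem:hyperplane-stabilizer} identifies the latter as $\K\one$. Stating the monotonicity $\St(V)\subseteq\St(VW)$ as a separate general fact is a harmless repackaging of the paper's one-line computation $uC^{\langle m\rangle}=uC^{\langle j\rangle}C^{\langle m-j\rangle}\subseteq C^{\langle m\rangle}$.
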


\begin{proof}
For $j=m$, this follows from Lemma~\ref{lem:hyperplane-stabilizer}, because $C^{\langle m\rangle}$ is the hyperplane $\sum_{i=1}^t\lambda_i z_i=0$ with all $\lambda_i\neq0$.

Now let $1\leq j<m$, and suppose that $u\in\St(C^{\langle j\rangle})$.  Then $uC^{\langle j\rangle}\subseteq C^{\langle j\rangle}$.  By the Schur-power identity,
\[
  C^{\langle j\rangle}C^{\langle m-j\rangle}=C^{\langle m\rangle}.
\]
Therefore
\[
  uC^{\langle m\rangle}
  =uC^{\langle j\rangle}C^{\langle m-j\rangle}
  \subseteq C^{\langle j\rangle}C^{\langle m-j\rangle}
  =C^{\langle m\rangle}.
\]
Thus $u\in\St(C^{\langle m\rangle})=\K\one$.  Since $\K\one\subseteq\St(C^{\langle j\rangle})$ is automatic, equality follows.
\end{proof}

\begin{proof}[Proof of Theorem~\ref{thm:main}]
Let $d_j=\dim C^{\langle j\rangle}$.  Then $d_1=\dim C=r$.
Because each $\ell_i$ is nonzero, the coefficient code $C$ has full support: in every coordinate, some row of $A$ is nonzero.  Since $\K$ has characteristic zero, it is infinite, and Lemma~\ref{lem:invertible-basis} gives an invertible vector $c\in C$.  For every $j\geq1$, the coordinatewise product of $j$ copies of $c$ belongs to $C^{\langle j\rangle}$ and is invertible, so all Schur powers used below are nonzero.

For $1\leq j<m$, apply Theorem~\ref{thm:schur-kneser} to $S=C$ and $T=C^{\langle j\rangle}$.  Since $ST=C^{\langle j+1\rangle}$ and, by Lemma~\ref{lem:all-stabilizers-trivial}, $\St(C^{\langle j+1\rangle})=\K\one$, we have
\[
  d_{j+1}=\dim C^{\langle j+1\rangle}
  \geq \dim C+\dim C^{\langle j\rangle}-1
  =r+d_j-1.
\]
Iterating this inequality from $d_1=r$ gives
\[
  d_j\geq r+(j-1)(r-1)=jr-(j-1)
\]
for all $1\leq j\leq m$.  Taking $j=m$ gives $d_m\geq mr-(m-1)$.  But the support-minimal dependence gives $d_m=t-1$.  Therefore $t-1\geq mr-(m-1)$, or equivalently,
\[
  r\leq \frac{t+m-2}{m}.
\]
Since $r=\dim L$, Theorem~\ref{thm:main} follows.
\end{proof}

\begin{corollary}\label{cor:large-characteristic-main}
Let $\K$ be a field, let $m\geq1$, and assume either $\operatorname{char}\K=0$ or $\operatorname{char}\K>m$.  Let $\ell_1,\ldots,\ell_t$ be nonzero homogeneous linear forms over $\K$.  Suppose that $\ell_1^m,\ldots,\ell_t^m$ are linearly dependent, but every proper subfamily of these $m$-th powers is linearly independent.  Put
\(
  L=\Span\{\ell_1,\ldots,\ell_t\}.
\)
Then
\[
  \dim L\leq \frac{t+m-2}{m}.
\]
Equivalently, in projective form, every degree-$m$ Veronese circuit spanning a projective $d$-flat has at least $md+2$ points.
\end{corollary}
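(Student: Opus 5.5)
The proof of Theorem~\ref{thm:main} carries over essentially verbatim. Only two places used characteristic zero: Lemma~\ref{lem:rank-schur-power}, and the infinitude of $\K$ needed for Lemma~\ref{lem:invertible-basis} and Theorem~\ref{thm:schur-kneser}. So the plan is to handle the rank computation through the characteristic hypothesis, and the infinitude issue through base change.

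\emph{Reduction to an infinite field.} First, if $\K$ is finite, pass to an infinite extension $E/\K$ of the same characteristic, for instance $E=\K(z)$. By Lemma~\ref{lem:base-change-circuits}, the family $\ell_1^m,\ldots,\ell_t^m$ is still a circuit over $E$, and it still consists of nonzero forms. The span $L$ keeps its dimension, since ranks of families of vectors are preserved under scalar extension (again Lemma~\ref{lem:base-change-circuits}, applied to $\ell_1,\ldots,\ell_t$). Hence we may assume $\K$ is infinite with $\operatorname{char}\K=0$ or $\operatorname{char}\K>m$.

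\emph{Identifying the Schur power.} Next, with $C$ the coefficient row space as in Section~\ref{sec:proof-main}, apply Lemma~\ref{lem:rank-schur-power} with $j=m$. This is exactly where $\operatorname{char}\K>m$ is needed: the multinomial coefficients $\binom{m}{\alpha}$ divide $m!$, so they are nonzero in $\K$. The lemma gives $\dim C^{\langle m\rangle}=t-1$. Minimality then makes $C^{\langle m\rangle}$ the hyperplane $\lambda^\perp$ with every $\lambda_i\neq0$.

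\emph{Kneser iteration.} Lemmas~\ref{lem:hyperplane-stabilizer} and \ref{lem:all-stabilizers-trivial} are purely linear-algebraic, so they give $\St(C^{\langle j\rangle})=\K\one$ for all $1\le j\le m$. Full support of $C$ holds because each $\ell_i\neq0$. Lemma~\ref{lem:invertible-basis} over the infinite field then gives an invertible vector in $C$, so all the Schur powers are nonzero. Theorem~\ref{thm:schur-kneser} needs only that $\K$ be infinite, and it yields $d_{j+1}\ge r+d_j-1$. Iterating gives $t-1=d_m\ge mr-(m-1)$, which is the stated bound. The projective form follows as in the proof of Theorem~\ref{thm:veronese-circuit-form}, with $d=r-1$.

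\emph{The main subtlety.} The step needing care is the base change. One must check both that the circuit property is preserved and that $\dim L$ is unchanged, so that the bound proved over $E$ transfers back to $\K$. Lemma~\ref{lem:base-change-circuits} settles both, since ranks of families of vectors, including the family $\ell_1,\ldots,\ell_t$ itself, are field-independent. Everything else is unchanged.
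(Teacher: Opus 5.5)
Your proposal is correct and follows essentially the paper's proof: extend scalars to an infinite field via Lemma~\ref{lem:base-change-circuits} (the paper uses an algebraic closure, you use $\K(z)$ when $\K$ is finite, which makes no difference). Then apply Lemma~\ref{lem:rank-schur-power} with $j=m$, which is valid because the multinomial coefficients divide $m!$, and run the Kneser iteration verbatim.
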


\begin{proof}
If $\operatorname{char}\K=0$, this is Theorem~\ref{thm:main}.  Suppose $\operatorname{char}\K>m$.  By Lemma~\ref{lem:base-change-circuits}, we may extend scalars to an algebraic closure; the circuit property and the dimension of $L$ are unchanged.  Over the resulting infinite field, Lemma~\ref{lem:rank-schur-power} applies for $j=m$, and the proof of Theorem~\ref{thm:main} goes through verbatim.
\end{proof}

\section{Proofs of the geometric applications}\label{sec:geometric-consequences}

We prove the geometric applications stated in Section~\ref{sec:applications}.  In each result the same three-step mechanism appears: pass to a support-minimal obstruction, apply the Veronese circuit bound, and interpret the resulting inequality as a flat-density certificate.

\begin{proof}[Proof of Proposition~\ref{prop:flat-cluster-extraction}]
Choose a minimal dependent subfamily of the vectors $\nu_m(X)$ and let $S$ be its support.  Then $\nu_m(S)$ is a circuit.  Applying Theorem~\ref{thm:veronese-circuit-form} gives $|S|\geq m\dim\langle S\rangle+2$.  The final statement follows because the flat $\langle S\rangle$ would otherwise contain at least $me+2$ points of $X$.
\end{proof}

\subsection{Hilbert functions and interpolation}

Let $X\subseteq\PP^n(\K)$ be a finite reduced set of $\K$-rational points.  We write
\[
  H_X(j)=\dim_\K \operatorname{im}\left(
  H^0(\PP^n,\mathcal O_{\PP^n}(j))\longrightarrow \K^X
  \right)
\]
for its degree-$j$ Hilbert function, after choosing arbitrary nonzero homogeneous representatives for the points of $X$.  Changing representatives rescales coordinates and does not change the rank.

\begin{proof}[Proof of Corollary~\ref{cor:hilbert-flat}]
If the degree-$m$ Veronese images of the points of $X$ were dependent, choose a minimally dependent subfamily.  If its original points span a projective $d$-flat, Theorem~\ref{thm:veronese-circuit-form} gives at least $md+2$ points in that flat, contradicting the hypothesis.
\end{proof}

\subsection{Cayley--Bacharach lower bounds}

Recall Definition~\ref{def:cbm}.

\begin{proof}[Proof of Corollary~\ref{cor:cb-lower-bound}]
For $x\in X$, the $CB(m)$ property says that the degree-$m$ evaluation functional at $x$ lies in the span of the evaluation functionals at the other points of $X$.  Equivalently, every Veronese vector $\nu_m(x)$ lies in the span of the other Veronese vectors.  Thus the vector configuration $\nu_m(X)$ is dependent.

Choose an inclusion-minimal dependent subset $C\subseteq X$ for this vector configuration.  Then $\nu_m(C)$ is a circuit.  A circuit has the property that every one of its elements lies in the span of the others, so $C$ has $CB(m)$.  By the inclusion-minimality of $X$ among nonempty finite reduced subsets with $CB(m)$, we must have $C=X$.  Hence $\nu_m(X)$ is a circuit.  Theorem~\ref{thm:veronese-circuit-form} gives $|X|\geq md+2$.

For sharpness, the rational-normal-curve examples of Section~\ref{sec:basic-sharpness} give Veronese circuits with
\(
  |X|=m\dim\langle X\rangle+2.
\)
Such a circuit has $CB(m)$, since each Veronese evaluation vector lies in the span of the others.  No proper subset has $CB(m)$, because $CB(m)$ would force a Veronese dependence, whereas every proper subfamily of a circuit is independent.  Hence these examples are inclusion-minimal finite reduced $CB(m)$ sets attaining the bound.
\end{proof}

\section{Proofs of the product and positive-characteristic extensions}\label{sec:product-positive-characteristic}

We prove the product and positive-characteristic extensions stated in Section~\ref{sec:applications}.  The common input is the following multi-factor Schur-product growth principle.

\subsection{A multi-factor Schur-product growth principle}

The following multi-factor growth principle is the technical device behind the Segre--Veronese and positive-characteristic extensions below.

\begin{proposition}\label{prop:multi-factor-schur}
Let $\K$ be an infinite field, and let $E_1,\ldots,E_M\subseteq\K^t$ be nonzero linear subspaces.  Put
\(
  P=E_1E_2\cdots E_M
\)
for their Schur product.  If $\St(P)=\K\one$, then
\[
  \dim P\geq 1+
  \sum_{j=1}^M(\dim E_j-1).
\]
In particular, if $P$ is a hyperplane with a full-support normal vector, then
\[
  t\geq 2+
  \sum_{j=1}^M(\dim E_j-1).
\]
\end{proposition}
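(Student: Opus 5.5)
We will argue by induction on $M$, following the pattern of the proof of Theorem~\ref{thm:main}. For $1\leq k\leq M$ put $P_k=E_1E_2\cdots E_k$, so that $P_1=E_1$, $P_{k+1}=P_kE_{k+1}$ and $P_M=P$. The claim to be proved by induction on $k$ is
\[
  \dim P_k\geq 1+\sum_{j=1}^k(\dim E_j-1).
\]
For $k=1$ this is the equality $\dim E_1=1+(\dim E_1-1)$. For the inductive step we apply Theorem~\ref{thm:schur-kneser} with $S=E_{k+1}$ and $T=P_k$, so that $ST=P_{k+1}$. This gives $\dim P_{k+1}\geq\dim P_k+\dim E_{k+1}-1$, and adding this to the inductive hypothesis yields the claim for $k+1$.

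To apply Theorem~\ref{thm:schur-kneser} at step $k$, two hypotheses are needed. First, $S$ and $T$ must be nonzero. Second, the stabilizer condition $\St(P_{k+1})=\K\one$ must hold. We handle these before running the induction.

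\textbf{Stabilizers.} This is the analogue of Lemma~\ref{lem:all-stabilizers-trivial}. Set $Q_k=E_{k+1}\cdots E_M$, with $Q_M=\K\one$. Commutativity and associativity of the coordinatewise product give $P_kQ_k=P$. Suppose $u\in\St(P_k)$. Then
\[
  uP=(uP_k)Q_k\subseteq P_kQ_k=P,
\]
so $u\in\St(P)=\K\one$. Since $\K\one\subseteq\St(P_k)$ always holds, we conclude $\St(P_k)=\K\one$ for every $k$.

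\textbf{Nonvanishing.} First, $P\neq0$: if $P=0$, then every $u\in\K^t$ would stabilize $P$, so $\St(P)=\K^t$. This equals $\K\one$ only when $t=1$, and in that degenerate case every nonzero $E_j$ is $\K$, so both inequalities are trivially checked. Assume therefore $t\geq2$, so $P\neq0$. Each $P_k$ is then nonzero, because $P=P_kQ_k$ and $P_k=0$ would force $P=0$. The spaces $E_{k+1}$ are nonzero by hypothesis. So both hypotheses of Theorem~\ref{thm:schur-kneser} hold at every step, and the induction gives the first inequality.

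\textbf{Second statement.} If $P$ is a hyperplane with a full-support normal vector, Lemma~\ref{lem:hyperplane-stabilizer} gives $\St(P)=\K\one$. Also $\dim P=t-1$. Substituting into the first inequality yields $t\geq2+\sum_{j=1}^M(\dim E_j-1)$.

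The only delicate point is the stabilizer transfer from $P$ down to the partial products $P_k$. This depends only on $P_k$ being a Schur factor of $P$, which is exactly the argument of Lemma~\ref{lem:all-stabilizers-trivial}. So I do not expect a genuine obstacle.
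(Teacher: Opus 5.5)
Your proposal is correct and matches the paper's proof essentially step for step: the paper also transfers the trivial stabilizer from $P$ to each partial product via $P=P_kQ_k$, rules out vanishing partial products using $\St(0)=\K^t$ (treating $t=1$ separately), iterates \cref{thm:schur-kneser}, and obtains the second statement from \cref{lem:hyperplane-stabilizer}; the paper starts the iteration from $P_0=\K\one$ rather than your $P_1=E_1$, which makes no difference. One small quibble: for $t=1$ the second inequality is not ``trivially checked'' but vacuous, since then $P=\K$ is never a hyperplane.
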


\begin{proof}
For $0\leq j\leq M$, set $P_0=\K\one$ and $P_j=E_1\cdots E_j$ for $j\geq1$.  The partial products to which we apply Theorem~\ref{thm:schur-kneser} are nonzero: if some $P_j$ with $j<M$ were zero, then the final product $P$ would be zero; for $t>1$ this would make $\St(P)=\K^t$, and for $t=1$ the product of nonzero subspaces of $\K$ is nonzero.

If $u\in\St(P_j)$, then
\[
  uP=uP_jE_{j+1}\cdots E_M\subseteq P_jE_{j+1}\cdots E_M=P.
\]
Thus $u\in\St(P)=\K\one$, so every nonzero partial product has trivial stabilizer.  Applying Theorem~\ref{thm:schur-kneser} to $P_{j-1}$ and $E_j$ gives
\[
  \dim P_j\geq \dim P_{j-1}+\dim E_j-1
\]
for every $1\leq j\leq M$.  Summing these inequalities from $P_0=\K\one$ yields the first assertion.  If $P$ is a hyperplane with a full-support normal vector, Lemma~\ref{lem:hyperplane-stabilizer} gives $\St(P)=\K\one$ and $\dim P=t-1$, which gives the second assertion.
\end{proof}

\begin{remark}[Equality in the multi-factor bound]\label{rem:critical-chain}
If equality holds in Proposition~\ref{prop:multi-factor-schur}, then equality holds at every intermediate step.  Indeed, the proof expresses $\dim P-1$ as a sum of $M$ increments, the $j$-th increment being bounded below by $\dim E_j-1$.  If the final sum is minimal, no intermediate increment can be larger.  Thus equality cases in the circuit bounds reduce to critical chains for Schur products, a linear analogue of critical-pair problems in additive combinatorics.
\end{remark}

\subsection{Segre--Veronese product circuits}

The same Schur-product argument proves a mixed version for Segre--Veronese points.  This is not merely a formal restatement of Theorem~\ref{thm:main}; it covers support-minimal relations among partially symmetric rank-one tensors.

\begin{proof}[Proof of Theorem~\ref{thm:segre-veronese}]
By Lemma~\ref{lem:base-change-circuits}, linear dependence and support-minimality are unchanged after extending scalars; the dimensions of the spans $L_q$ are unchanged as well.  Thus, if necessary, we first pass to an infinite extension field so that Theorem~\ref{thm:schur-kneser} applies.

Choose a basis of each $L_q$, and write the coefficient row space of the $t$ forms $\ell_{1,q},\ldots,\ell_{t,q}$ in that basis as a code $C_q\subseteq\K^t$.  Thus $\dim C_q=\dim L_q$.

By Lemma~\ref{lem:rank-schur-power}, applied with $j=m_q$, the coefficient row space of $\ell_{1,q}^{m_q},\ldots,\ell_{t,q}^{m_q}$ is $C_q^{\langle m_q\rangle}$; the stated characteristic hypothesis ensures that the required multinomial coefficients do not vanish.  Choose the induced monomial bases in each symmetric-power factor and the corresponding tensor-product basis in $\bigotimes_q\Sym^{m_q}U_q$.  A coefficient row of $F_1,\ldots,F_t$ is then obtained by choosing one coefficient row from each factor and multiplying these rows coordinatewise.  Therefore the coefficient row space of $F_1,\ldots,F_t$ is
\[
  P=C_1^{\langle m_1\rangle}C_2^{\langle m_2\rangle}\cdots C_s^{\langle m_s\rangle}\subseteq\K^t.
\]
The support-minimal dependence assumption gives $\dim P=t-1$, and the unique dependence has full support.  Hence $P$ is a hyperplane with a full-support normal vector, so $\St(P)=\K\one$ by Lemma~\ref{lem:hyperplane-stabilizer}.

List the spaces $C_q$ with repetitions as
\[
  D_1,D_2,\ldots,D_M,
  \qquad
  M=m_1+\cdots+m_s,
\]
where $C_q$ appears exactly $m_q$ times.  Set
\[
  P_j=D_1D_2\cdots D_j
  \quad(1\leq j\leq M),
  \qquad
  P_0=\K\one.
\]
Thus $P_M=P$.  We claim that $\St(P_j)=\K\one$ for every $1\leq j\leq M$.  Indeed, write $P=P_jQ_j$ for the product $Q_j$ of the remaining factors.  If $uP_j\subseteq P_j$, then
\[
  uP=uP_jQ_j\subseteq P_jQ_j=P,
\]
so $u\in\St(P)=\K\one$.

For $j=1$, we have $\dim P_1=\dim D_1$.  For $2\leq j\leq M$, Theorem~\ref{thm:schur-kneser} applied to $S=P_{j-1}$ and $T=D_j$ gives
\[
  \dim P_j\geq \dim P_{j-1}+\dim D_j-1,
\]
because $P_j=P_{j-1}D_j$ and $\St(P_j)=\K\one$.  Iterating yields
\[
  \dim P
  =\dim P_M
  \geq 1+\sum_{j=1}^M(\dim D_j-1)
  =1+\sum_{q=1}^s m_q(\dim L_q-1).
\]
Since $\dim P=t-1$, the desired inequality follows.
\end{proof}

For $s=1$, Theorem~\ref{thm:segre-veronese} recovers Theorem~\ref{thm:main}.  For $s>1$, it gives the analogous circuit bound for partially symmetric rank-one tensors and multihomogeneous diagonal product circuits.

\subsection{Positive characteristic and digit sums}

The same Schur-product mechanism gives the corresponding replacement in positive characteristic.  Write the base-$p$ expansion of $m$ as
\[
  m=m_0+m_1p+\cdots+m_ap^a,
  \qquad 0\leq m_j<p,
\]
and set
\[
  w_p(m)=m_0+m_1+\cdots+m_a.
\]

\begin{lemma}\label{lem:frobenius-coefficient-space}
Let $\K$ be a perfect field of characteristic $p>0$.  Let
\[
  m=m_0+m_1p+\cdots+m_ap^a,
  \qquad 0\leq m_j<p.
\]
Let $\ell_1,\ldots,\ell_t$ be linear forms whose span has basis $x_1,\ldots,x_r$, and let $C\subseteq\K^t$ be the coefficient row space of the family $\ell_1,\ldots,\ell_t$ in this basis.  For $j\geq0$, let
\[
  C^{[p^j]}=\{(c_1^{p^j},\ldots,c_t^{p^j}):(c_1,\ldots,c_t)\in C\}
\]
denote the coordinatewise $p^j$-Frobenius image of $C$.  Since $\K$ is perfect, the map $\sigma_j\colon a\mapsto a^{p^j}$ is a field automorphism.  Although $\sigma_j$ acts only semilinearly on $\K^t$, its image of a $\K$-subspace is again a $\K$-subspace: if $y=\sigma_j(c)$ with $c\in C$ and $\lambda\in\K$, then $\lambda y=\sigma_j(\sigma_j^{-1}(\lambda)c)\in C^{[p^j]}$, and additivity is immediate.  Hence $C^{[p^j]}$ is a $\K$-linear subspace and $\dim C^{[p^j]}=\dim C$.  Then the coefficient row space of $\ell_1^m,\ldots,\ell_t^m$ is
\[
  \prod_{j=0}^a \left(C^{[p^j]}\right)^{\langle m_j\rangle},
\]
where factors with $m_j=0$ are omitted.
\end{lemma}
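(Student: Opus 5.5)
The plan is to mirror the proof of Lemma~\ref{lem:rank-schur-power}, replacing the multinomial expansion by the Frobenius factorization of $\ell_i^m$.

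\emph{Step 1: factor the power.} Because $\operatorname{char}\K=p$, Frobenius is additive, so
\[
  \ell_i^{p^j}=a_{1i}^{p^j}x_1^{p^j}+\cdots+a_{ri}^{p^j}x_r^{p^j}.
\]
Consequently
\[
  \ell_i^m=\prod_{j=0}^a\left(\sum_{k=1}^r a_{ki}^{p^j}x_k^{p^j}\right)^{m_j}.
\]

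\emph{Step 2: expand each factor.} Treat $y_{k,j}=x_k^{p^j}$ as formal symbols. Expand the $j$-th factor by the multinomial theorem in exponent $m_j<p$, so every multinomial coefficient $\binom{m_j}{\beta^{(j)}}$ with $|\beta^{(j)}|=m_j$ is a unit in $\K$. Then the coefficient of $\prod_{j}\prod_k y_{k,j}^{\beta^{(j)}_k}$, as $i$ varies, is a nonzero scalar times the coordinatewise product over $j$ of $m_j$ rows of the Frobenius-twisted matrix $A^{[p^j]}$. The rows of $A^{[p^j]}$ span $C^{[p^j]}$, as shown in the statement.

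\emph{Step 3: pass back to monomials in $x$.} The substitution $\prod_{j,k}y_{k,j}^{\beta^{(j)}_k}\mapsto x^{\alpha}$, with $\alpha_k=\sum_j\beta^{(j)}_kp^j$, is injective on the indices that occur, because $\beta^{(j)}_k\le m_j<p$. Each $\alpha_k$ therefore has base-$p$ digits $\beta^{(j)}_k$, and this representation is unique. So distinct terms give distinct monomials of degree $m$ in $x_1,\ldots,x_r$, with no collisions or cancellations. These monomials are linearly independent, as in Lemma~\ref{lem:rank-schur-power}, after extending $x_1,\ldots,x_r$ to coordinates. Hence the coefficient row space of $\ell_1^m,\ldots,\ell_t^m$ is spanned by the products
\[
  \prod_j\bigl(\text{$m_j$ rows of }A^{[p^j]}\bigr).
\]

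\emph{Step 4: identify the span.} By multilinearity of the Schur product, exactly as in Lemma~\ref{lem:rank-schur-power}, these products span $\prod_j(C^{[p^j]})^{\langle m_j\rangle}$. Indeed, that Schur product is spanned by products of basis vectors of each $C^{[p^j]}$, and the Frobenius images of the rows of $A$ form such a basis, since $\dim C^{[p^j]}=\dim C$.

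The main obstacle is the bookkeeping in Step 3. One must check that the $p$-adic digit uniqueness really prevents different $(\beta^{(0)},\ldots,\beta^{(a)})$ from merging into one monomial coefficient, since a merge could allow cancellation. Perfectness of $\K$ is used only to know that the $C^{[p^j]}$ are subspaces of dimension $\dim C$.
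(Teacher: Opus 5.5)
Your proposal is correct and follows essentially the same route as the paper's proof: Frobenius additivity gives $\ell_i^{p^j}$ with twisted coefficients, and the multinomial expansion of each factor with exponent $m_j<p$ has unit coefficients. Base-$p$ digit uniqueness (no carrying) then prevents different tuples of multi-indices from contributing to the same monomial, and multilinearity identifies the span with $\prod_j (C^{[p^j]})^{\langle m_j\rangle}$. The paper leaves implicit that the Frobenius images of the rows of $A$ span $C^{[p^j]}$, which you state explicitly.
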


\begin{proof}
Write
\(
  \ell_i=\sum_{u=1}^r a_{ui} x_u.
\)
In characteristic $p$,
\[
  \ell_i^{p^j}=\sum_{u=1}^r a_{ui}^{p^j} x_u^{p^j}.
\]
For a multi-index $\alpha^{(j)}=(\alpha_{j,1},\ldots,\alpha_{j,r})$ with $|\alpha^{(j)}|=m_j$, the coefficient row in the expansion of $(\ell_i^{p^j})^{m_j}$ is a nonzero scalar multiple of
\[
  \left(\prod_{u=1}^r (a_{ui}^{p^j})^{\alpha_{j,u}}\right)_{i=1}^t.
\]
The scalar is nonzero because $m_j<p$, so all relevant multinomial coefficients are nonzero in $\K$.  These rows span $\left(C^{[p^j]}\right)^{\langle m_j\rangle}$, by multilinearity of coordinatewise products.

Now multiply over all $j$.  The resulting monomial in $x_1,\ldots,x_r$ has exponent vector $\beta=(\beta_1,\ldots,\beta_r)$ with
\(
  \beta_u=\sum_{j=0}^a \alpha_{j,u}p^j.
\)
Since $|\alpha^{(j)}|=m_j<p$, each $\alpha_{j,u}$ lies in $[0,p-1]$, so no carrying occurs in any coordinate.  Hence the entries $\alpha_{j,u}$ are precisely the base-$p$ digits of $\beta_u$, and the exponent vector $\beta$ uniquely recovers all the multi-indices $\alpha^{(j)}$.  Therefore two different choices of the multi-indices cannot contribute to the same monomial.  It follows that the coefficient rows of $\ell_i^m$ are exactly the coordinatewise products of coefficient rows coming from the factors $\left(C^{[p^j]}\right)^{\langle m_j\rangle}$.  Taking spans gives the displayed equality.
\end{proof}

\begin{proof}[Proof of Theorem~\ref{thm:frobenius-corrected}]
It is enough to prove the result after extending scalars to an algebraic closure of $\K$: by Lemma~\ref{lem:base-change-circuits}, minimal linear dependence of a finite family is unchanged by field extension, and the dimension of the span $L$ is unchanged as well.  Thus we may assume that $\K$ is algebraically closed, hence infinite and perfect.

Let $r=\dim L$.  Choose a basis of $L$, and let $C\subseteq\K^t$ be the row space of the coefficient matrix of $\ell_1,\ldots,\ell_t$ in this basis.  By the perfect-field observation in Lemma~\ref{lem:frobenius-coefficient-space}, each coordinatewise Frobenius image $C^{[p^j]}$ is a $\K$-linear subspace of dimension $r$.  By the same lemma, the coefficient row space of $\ell_1^m,\ldots,\ell_t^m$ is
\[
  P=\prod_{j=0}^a \left(C^{[p^j]}\right)^{\langle m_j\rangle},
\]
with factors of multiplicity zero omitted.

By support-minimal dependence, $P$ is a hyperplane in $\K^t$ with a full-support normal vector.  Lemma~\ref{lem:hyperplane-stabilizer} gives $\St(P)=\K\one$.  List the spaces $C^{[p^j]}$ with multiplicity $m_j$; there are $w_p(m)$ factors, each of dimension $r$.  Proposition~\ref{prop:multi-factor-schur} gives
\[
  \dim P\geq 1+w_p(m)(r-1).
\]
Since $\dim P=t-1$, we obtain
\[
  t-1\geq 1+w_p(m)(r-1),
\]
which is equivalent to the stated bound.
\end{proof}

When $p>m$, one has $w_p(m)=m$, so Theorem~\ref{thm:frobenius-corrected} recovers the numerical bound of Theorem~\ref{thm:main}.  We record the corresponding projective form for reference.

\begin{proof}[Proof of Corollary~\ref{cor:veronese-circuit-large-characteristic}]
Choose homogeneous representatives $p_i=[\ell_i]$.  Then
\[
  d+1=\dim\Span\{\ell_1,\ldots,\ell_t\}.
\]
Since $p>m$, Theorem~\ref{thm:frobenius-corrected} gives
\[
  d+1\leq \frac{t+m-2}{m},
\]
which is equivalent to the stated inequality.
\end{proof}

When $m=p^a$, one has $w_p(m)=1$, so Theorem~\ref{thm:frobenius-corrected} gives the ordinary linear circuit bound $\dim L\leq t-1$.  This is sharp, as the next example shows.

\begin{proof}[Proof of Proposition~\ref{prop:frobenius-degeneration}]
For $r=1$, take two equal nonzero linear forms.  For $r\geq2$, take
\[
  \ell_1=x_1,\ldots,\ell_r=x_r,
  \qquad
  \ell_{r+1}=x_1+\cdots+x_r.
\]
In characteristic $p$,
\[
  \ell_{r+1}^{p^a}=x_1^{p^a}+\cdots+x_r^{p^a},
\]
so the $p^a$-th powers are linearly dependent.  They are support-minimally dependent: the vectors $x_1^{p^a},\ldots,x_r^{p^a}$ are linearly independent, and if a proper subfamily contains $\ell_{r+1}^{p^a}$ while omitting, say, $x_j^{p^a}$, then the coefficient of the monomial $x_j^{p^a}$ forces the coefficient of $\ell_{r+1}^{p^a}$ in any relation to be zero, after which all remaining coefficients are zero.  Hence the powers form a circuit.
\end{proof}

\section{Proofs of the extremal and near-extremal results}\label{sec:extremal-stability}

We prove the equality and near-equality statements from Section~\ref{sec:applications}.  The rational-normal-curve construction proves sharpness, but it does not exhaust the equality locus.  The correct language is Hilbert functions and Cayley--Bacharach duality: equality forces the first difference of the Hilbert function to be $(1,a,\ldots,a,1)$, with $m$ copies of $a$, and near-equality allows only a controlled surplus before degree $m$.

Throughout this section, $\K$ has characteristic zero, $m\geq1$, and $a\geq1$.  Let
\[
  X=\{p_1,\ldots,p_t\}\subseteq\PP^a(\K)
\]
be a reduced nondegenerate finite set of $\K$-rational points, meaning $\langle X\rangle=\PP^a$.  We write $H_X(j)$ for its Hilbert function and
\[
  \Delta H_X(j)=H_X(j)-H_X(j-1),
  \qquad H_X(-1)=0,
\]
for its first difference.

We call $X$ an \emph{extremal Veronese circuit configuration} if $\nu_m(X)$ is a circuit and $|X|=ma+2$.

Recall from Definition~\ref{def:cbm} that a finite reduced set $X$ has $CB(m)$ if and only if
\[
  H_{X\setminus\{p\}}(m)=H_X(m)
  \qquad\text{for every }p\in X.
\]

\begin{lemma}\label{lem:circuit-cbp}
If $\nu_m(X)$ is a circuit, then $X$ has $CB(m)$ and
\(
  H_X(m)=|X|-1.
\)
\end{lemma}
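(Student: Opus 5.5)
The plan is to identify $H_X(m)$ with the rank of the Veronese vector configuration $\nu_m(X)$, and then read both conclusions off the circuit property. Fix homogeneous representatives $v_1,\ldots,v_t$ for $p_1,\ldots,p_t$. Consider the degree-$m$ evaluation map $\Sym^m(W^*)\to\K^X$, where $W$ is the ambient space of the points. Its image $E_m$ is the row space of the matrix whose columns are the coordinate vectors of $\nu_m(p_i)$ in the dual basis of degree-$m$ monomials; concretely, these are the values $v_i^\alpha$ of the monomials. Up to the nonzero multinomial rescalings of rows, this is the same as the coefficient matrix of the powers $\ell_i^m$ under the identification used in Lemma~\ref{lem:rank-schur-power}, and rescaling rows does not change the rank. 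Thus $H_X(m)$ equals the rank of the family $\nu_m(p_1),\ldots,\nu_m(p_t)$, and this rank is independent of the chosen representatives.

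Since $\nu_m(X)$ is a circuit, its rank is $t-1$: the whole family is dependent, and any $t-1$ of its members are independent. Hence $H_X(m)=|X|-1$. The one point I would state explicitly is that the points are pairwise distinct, so the family has exactly $|X|$ members. This holds because $X$ is a reduced set, so its points are distinct.

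For $CB(m)$, I use the reformulation recorded just before the lemma: $X$ has $CB(m)$ exactly when $H_{X\setminus\{p\}}(m)=H_X(m)$ for every $p\in X$. By the same identification, $H_{X\setminus\{p\}}(m)$ is the rank of the subfamily obtained by deleting $\nu_m(p)$. That subfamily is a proper subfamily of a circuit, so it is independent and has rank $t-1$, which equals $H_X(m)$.

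Alternatively, I can argue directly. The unique relation $\sum\lambda_i\nu_m(p_i)=0$ has all $\lambda_i\neq0$, so each $\nu_m(p_i)$ lies in the span of the others. Any degree-$m$ form $F$ vanishing on $X\setminus\{p_i\}$ pairs to zero with those other vectors, and so also vanishes at $p_i$; this is the argument already used in the proof of Corollary~\ref{cor:cb-lower-bound}.

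I do not expect a real obstacle here. The only step needing care is the identification of $H_X(m)$ with the rank of $\nu_m(X)$, which is where characteristic zero is used: the multinomial factors must be nonzero for the pairing between forms and points to match the coefficient rows.
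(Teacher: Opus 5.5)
Your proposal is correct and matches the paper's proof: identify $H_X(m)$ with the rank of $\nu_m(X)$, read off rank $|X|-1$ from the circuit property, and obtain $CB(m)$ from $H_{X\setminus\{p\}}(m)=|X|-1=H_X(m)$, since each deletion is independent. The paper leaves implicit that this identification of $H_X(m)$ with the rank of the powers $\ell_i^m$ needs nonvanishing multinomial coefficients, hence characteristic zero under its definition $[\ell]\mapsto[\ell^m]$; you make this explicit, and your alternative direct argument is also fine.
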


\begin{proof}
The value $H_X(m)$ is the linear rank of $\nu_m(X)$.  Since $\nu_m(X)$ is a circuit, this rank is $|X|-1$.  After deleting any point $p\in X$, the remaining Veronese images are linearly independent, so
\[
  H_{X\setminus\{p\}}(m)=|X|-1=H_X(m).
\]
This is exactly $CB(m)$.
\end{proof}

\begin{lemma}\label{lem:evaluation-growth}
Let $\K$ be a field of characteristic zero, and let $X\subseteq\PP^a(\K)$ be a reduced nondegenerate finite set of $\K$-rational points.  If $\nu_m(X)$ is a circuit, then
\[
  H_X(j+1)\geq H_X(j)+a
  \qquad(0\leq j<m).
\]
\end{lemma}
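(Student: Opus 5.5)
We translate to Schur powers exactly as in the proof of Theorem~\ref{thm:main}. Choose homogeneous representatives $\ell_1,\ldots,\ell_t$ for the points $p_1,\ldots,p_t$. Since $X$ is nondegenerate in $\PP^a$, these span a space $L$ of dimension $a+1$. Let $C\subseteq\K^t$ be the coefficient row space of the $\ell_i$ in a basis of $L$, so $\dim C=a+1$.

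By Lemma~\ref{lem:rank-schur-power} (characteristic zero), the degree-$j$ evaluation space of $X$ is $C^{\langle j\rangle}$ for $j\geq1$. Hence $H_X(j)=\dim C^{\langle j\rangle}$ for $1\leq j\leq m$. For $j=0$, the degree-$0$ evaluation space is $\K\one$, so $H_X(0)=1$. It is convenient to set $C^{\langle 0\rangle}=\K\one$. This is consistent because $C^{\langle 0\rangle}C=C$.

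For the step $j=0$, we need $H_X(1)\geq H_X(0)+a$, that is, $a+1\geq 1+a$. This holds with equality, since $H_X(1)=\dim C=a+1$.

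For $1\leq j<m$, the argument is the Kneser step already carried out in the proof of Theorem~\ref{thm:main}:
\begin{enumerate}[label=\textup{(\arabic*)}]
  \item The circuit hypothesis makes $C^{\langle m\rangle}$ a hyperplane whose normal vector has full support.
  \item Lemma~\ref{lem:hyperplane-stabilizer} then gives $\St(C^{\langle m\rangle})=\K\one$.
  \item Lemma~\ref{lem:all-stabilizers-trivial} propagates this, giving $\St(C^{\langle j+1\rangle})=\K\one$.
  \item The spaces are nonzero: $C$ has full support because each $\ell_i\neq0$, and $\K$ is infinite. So Lemma~\ref{lem:invertible-basis} yields an invertible vector of $C$, and its powers are invertible vectors lying in every $C^{\langle j\rangle}$.
  \item Theorem~\ref{thm:schur-kneser}, applied with $S=C$ and $T=C^{\langle j\rangle}$, gives
\[
  \dim C^{\langle j+1\rangle}\geq \dim C+\dim C^{\langle j\rangle}-1=H_X(j)+a.
\]
\end{enumerate}

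The only point needing care is the bookkeeping at the ends of the range. The identification $H_X=\dim C^{\langle j\rangle}$ fails at $j=0$, because Lemma~\ref{lem:rank-schur-power} is stated only for $j\geq1$; this is why that step is handled separately above. One should also check that the hypothesis $j+1\leq m$ is exactly what Lemma~\ref{lem:all-stabilizers-trivial} requires. Beyond that, no new obstacle arises: the substance is entirely the trivial-stabilizer Kneser inequality, which is already available.
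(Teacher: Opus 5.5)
Your proposal is correct and follows the paper's proof essentially verbatim. Like the paper, you identify the degree-$j$ evaluation space with $C^{\langle j\rangle}$, obtain $\St(C^{\langle j+1\rangle})=\K\one$ from the full-support hyperplane $C^{\langle m\rangle}$ via the Schur-power identity, apply Theorem~\ref{thm:schur-kneser} with $S=C$ and $T=C^{\langle j\rangle}$, and settle $j=0$ directly from nondegeneracy. The only difference is cosmetic: the paper gets $E_j=C^{\langle j\rangle}$ for all $j\geq0$, in any characteristic, simply because degree-$j$ forms are spanned by products of $j$ linear forms, so your detour through Lemma~\ref{lem:rank-schur-power} and the concern about $j=0$ are unnecessary.
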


\begin{proof}
Choose homogeneous representatives for the points of $X$, and let $E_j\subseteq\K^X$ be the degree-$j$ evaluation space.  Since $X$ is nondegenerate, $H_X(0)=1$ and $H_X(1)=a+1$, so the assertion for $j=0$ is equality.

Let $C=E_1$.  The degree-$j$ evaluation space is $E_j=C^{\langle j\rangle}$ for every $j\geq0$, because degree-$j$ forms are spanned by products of $j$ linear forms.  Since $\nu_m(X)$ is a circuit, there is a vector $\lambda\in(\K^\times)^X$ such that
\(
  E_m=\lambda^\perp.
\)

By Lemma~\ref{lem:hyperplane-stabilizer}, $\St(E_m)=\K\one$.  If $1\leq q<m$ and $u\in\St(E_q)$, then
\[
  uE_m=uE_qE_{m-q}\subseteq E_qE_{m-q}=E_m,
\]
so $u\in\St(E_m)=\K\one$.  Thus $\St(E_q)=\K\one$ for all $1\leq q\leq m$.

For $1\leq j<m$, apply Theorem~\ref{thm:schur-kneser} to $S=C$ and $T=E_j$.  Since $CE_j=E_{j+1}$ and $\St(E_{j+1})=\K\one$, we obtain
\[
  H_X(j+1)=\dim E_{j+1}\geq \dim C+\dim E_j-1=(a+1)+H_X(j)-1.
\]
This is the desired inequality.
\end{proof}

We shall also use the following elementary property of Hilbert functions of finite reduced sets.

\begin{lemma}\label{lem:no-plateau}
Let $\K$ be an infinite field, and let $X\subseteq\PP^a(\K)$ be a finite reduced set of $\K$-rational points.  If $H_X(d)<|X|$, then
\[
  H_X(d+1)>H_X(d).
\]
\end{lemma}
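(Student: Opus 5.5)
The plan is to prove the contrapositive: if $H_X(d+1)=H_X(d)$, then $H_X(d)=|X|$. I will phrase everything in terms of the evaluation spaces $E_j\subseteq\K^X$, computed from fixed homogeneous representatives of the points. Let $C=E_1$ be the evaluation space of linear forms. By the same remark used in Lemma~\ref{lem:evaluation-growth}, $E_j=C^{\langle j\rangle}$, and $E_{j+1}=CE_j$. For $j\geq1$ this is immediate. For $j=0$, $E_0=\K\one$ and $E_1=C$.

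First I record that $E_j\subseteq\K^X$ is the image of $E_d$ under multiplication by a full-support vector. Since $\K$ is infinite and $X$ is finite, there is a linear form $h$ vanishing at no point of $X$: avoid a finite union of hyperplanes in the dual space. Its evaluation vector $c\in C$ is then invertible. Consequently $cE_d\subseteq E_{d+1}$ and $\dim cE_d=\dim E_d$, so $H_X(d+1)\geq H_X(d)$ in general.

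Now assume $H_X(d+1)=H_X(d)$. Then $cE_d=E_{d+1}$. For any $u\in C$ we get $uE_d\subseteq E_{d+1}=cE_d$, hence $(c^{-1}u)E_d\subseteq E_d$. So $c^{-1}C\subseteq\St(E_d)$.

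Since $\St(E_d)$ is a unital subalgebra of $\K^X$, Lemma~\ref{lem:finitely-many-subalgebras} (its proof) says it consists exactly of the vectors constant on the blocks of some partition of $X$. I claim this partition is into singletons, i.e.\ $\St(E_d)=\K^X$. Take two distinct points $x\neq y$ of $X$. Their representatives are not proportional, so there is a linear form $\ell$ with $\ell(x)/h(x)\neq\ell(y)/h(y)$. To see this, choose $\ell$ vanishing at $x$ but not at $y$. Thus the vector $c^{-1}u$, with $u$ the evaluation of $\ell$, separates $x$ and $y$. Hence all blocks are singletons and $\St(E_d)=\K^X$.

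It follows that $E_d$ is closed under multiplication by all of $\K^X$, i.e.\ $E_d$ is an ideal of $\K^X$, hence the set of vectors supported on some subset $Y\subseteq X$. But $c^d\in E_d$ is invertible, so $Y=X$ and $E_d=\K^X$. Therefore $H_X(d)=|X|$, which is the contrapositive of the statement.

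The main point needing care is the reduction from "$E_{d+1}=cE_d$" to "$E_d$ is stable under all of $c^{-1}C$" together with the separation argument. Both are elementary, and the step I expect to require the most attention is the edge case $d=0$. There $E_0=\K\one$, and $H_X(1)=1$ forces all points to coincide, so $|X|=1=H_X(0)$; this is consistent with the general argument above.
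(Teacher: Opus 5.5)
Your proof is correct, but it takes a different route from the paper's.

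\textbf{The paper's route.} It works in the homogeneous coordinate ring $R$. The nonvanishing linear form $h$ is a nonzerodivisor, and the Artinian reduction $A=R/(h)$ has $\dim A_e=\Delta H_X(e)$. A plateau $H_X(d+1)=H_X(d)$ means $A_{d+1}=0$. Since $A$ is generated in degree one, this gives $A_e=0$ for all $e\geq d+1$, so $H_X$ is constant from $d$ onward. That contradicts the imported fact that $H_X(e)=|X|$ for $e\gg0$.

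\textbf{Your route.} You stay inside the Schur-product language of Section~\ref{sec:kneser}. The plateau gives $E_{d+1}=cE_d$, hence $c^{-1}C\subseteq\St(E_d)$. Stabilizers are unital subalgebras, and $c^{-1}C$ separates the points of $X$. So the classification in the proof of Lemma~\ref{lem:finitely-many-subalgebras} forces $\St(E_d)=\K^X$. Then $E_d$ is a coordinate ideal of $\K^X$ containing the invertible vector $c^d$, so $E_d=\K^X$.

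\textbf{Comparison.} Your version is self-contained: you never iterate the plateau to all higher degrees, and you never appeal to the eventual value $|X|$ of the Hilbert function. Your separation step in effect proves that fact directly, and already in degree $d$. The paper's version is shorter given standard commutative algebra. It also gives more structure: $\Delta H_X$ is the Hilbert function of a standard graded Artinian algebra, so a plateau, once it occurs, persists in every higher degree.
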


\begin{proof}
Choose a linear form $h$ that does not vanish at any point of $X$.  Such an $h$ exists because the field is infinite.  Multiplication by $h$ is injective on the homogeneous coordinate ring $R$ of $X$, and the Artinian reduction $A=R/(h)$ satisfies
\[
  \dim_\K A_e=H_X(e)-H_X(e-1)
  \qquad(e\geq0),
\]
with the convention $H_X(-1)=0$.  Indeed, this follows from the exact sequence
\[
  0\longrightarrow R(-1)\xrightarrow{\cdot h}R\longrightarrow A\longrightarrow0.
\]
If $H_X(d+1)=H_X(d)$, then $A_{d+1}=0$.  Since $A$ is a standard graded algebra generated in degree one, this implies $A_e=0$ for every $e\geq d+1$.  Hence $H_X(e)=H_X(d)$ for every $e\geq d$.  This contradicts the fact that the Hilbert function of a finite reduced set eventually equals $|X|$, unless $H_X(d)=|X|$.  Therefore $H_X(d)<|X|$ forces $H_X(d+1)>H_X(d)$.
\end{proof}

\subsection{Hilbert-function classification of equality}

The key point is that equality in the main bound forces equality at every intermediate Schur-power growth step.

\begin{proof}[Proof of Theorem~\ref{thm:extremal-cb}]
Assume first that $\nu_m(X)$ is a circuit and $|X|=ma+2$.  By Lemma~\ref{lem:circuit-cbp}, $X$ has $CB(m)$ and
\[
  H_X(m)=|X|-1=ma+1.
\]
Let $C\subseteq\K^{|X|}$ be the degree-one evaluation code of $X$, after choosing homogeneous representatives.  Since $X\subseteq\PP^a(\K)$ is nondegenerate, $\dim C=a+1$.  The degree-$j$ evaluation space is $C^{\langle j\rangle}$, so
\[
  H_X(j)=\dim C^{\langle j\rangle}.
\]
Lemma~\ref{lem:evaluation-growth} gives the growth inequalities
\[
  H_X(j+1)\geq H_X(j)+a
  \qquad(0\leq j<m).
\]
Starting from $H_X(0)=1$, we get
\[
  H_X(j)\geq ja+1
  \qquad(0\leq j\leq m).
\]
Since equality holds at $j=m$, every intermediate growth step must be an equality.  Hence
\[
  H_X(j)=ja+1
  \qquad(0\leq j\leq m).
\]
By Lemma~\ref{lem:no-plateau}, $H_X(m+1)=|X|$, because $H_X(m)=|X|-1$.  Thus
\[
  \Delta H_X=(1,\underbrace{a,\ldots,a}_{m\text{ times}},1).
\]

Conversely, suppose $X$ has $CB(m)$ and $\Delta H_X=(1,\underbrace{a,\ldots,a}_{m\text{ times}},1)$.  Then
\[
  |X|=ma+2
  \qquad\text{and}\qquad
  H_X(m)=ma+1=|X|-1.
\]
Thus $\nu_m(X)$ is linearly dependent.  Since $X$ has $CB(m)$, for every $p\in X$ we have
\[
  H_{X\setminus\{p\}}(m)=H_X(m)=|X|-1=|X\setminus\{p\}|.
\]
Therefore each deletion $\nu_m(X\setminus\{p\})$ is linearly independent.  Every proper subset is contained in such a deletion, hence is linearly independent.  Thus $\nu_m(X)$ is a circuit.
\end{proof}

We next explain the arithmetically Gorenstein interpretation.  Theorem~\ref{thm:extremal-cb} is self-contained and holds over any field of characteristic zero.

For this interpretation, however, we assume that the ground field is algebraically closed.  We use the criterion of Davis--Geramita--Orecchia \cite[Theorem~5]{DGO}, in the reduced projective zero-dimensional case, together with the modern Cayley--Bacharach formulations of Geramita--Kreuzer--Robbiano \cite{GKR} and Kreuzer--Long--Robbiano \cite{KLR}; see also Eisenbud--Green--Harris \cite{EGH}.  We use only the following standard form of the criterion.  Let
\[
  h_X(j)=\Delta H_X(j),\qquad
  s=\max\{j:h_X(j)\neq0\}.
\]
For a reduced finite set of points, the criterion says that $X$ is arithmetically Gorenstein if and only if the $h$-vector $h_X$ is symmetric and $X$ has the Cayley--Bacharach property in degree $s-1$, namely
\[
  H_{X\setminus\{p\}}(s-1)=H_X(s-1)\qquad(p\in X).
\]
For the $h$-vector $(1,a,\ldots,a,1)$, with $m$ copies of $a$, one has $s=m+1$, so this is exactly the condition $CB(m)$ used in Definition~\ref{def:cbm}.  The elementary deletion formula in Lemma~\ref{lem:cbp-to-dgo} records this matching in the present notation.  Over a general field of characteristic zero, the interpretation may be read after base change to the algebraic closure.

\begin{lemma}\label{lem:cbp-to-dgo}
Let $\K$ be an infinite field, and let $X\subseteq\PP^a(\K)$ be a reduced finite set of $\K$-rational points.  Suppose that
\[
  X\text{ has }CB(m)
  \qquad\text{and}\qquad
  \Delta H_X=(1,\underbrace{a,\ldots,a}_{m\text{ times}},1).
\]
Then $X$ has $CB(e)$ for every $0\leq e\leq m$.  Moreover, for every $p\in X$ and every $e\geq0$,
\[
  H_{X\setminus\{p\}}(e)=\min\{H_X(e), |X|-1\},
\]
and the Hilbert function satisfies the symmetry relation
\[
  H_X(i)+H_X(m-i)=|X|
  \qquad(0\leq i\leq m).
\]
\end{lemma}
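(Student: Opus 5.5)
From the hypothesis on $\Delta H_X$ we first read off $H_X(j)=ja+1$ for $0\le j\le m$ and $H_X(e)=ma+2=|X|$ for $e\ge m+1$. The symmetry relation then follows immediately: $H_X(i)+H_X(m-i)=ia+1+(m-i)a+1=ma+2=|X|$. It remains to prove the deletion formula, which yields $CB(e)$ for $e\le m$ as a special case: $CB(e)$ means $H_{X\setminus\{p\}}(e)=H_X(e)$, and for $e\le m$ we have $H_X(e)\le ma+1=|X|-1$, so the minimum equals $H_X(e)$.

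For $e\ge m+1$ the formula is easy. Since $H_X(m)=|X|-1$ and $CB(m)$ holds, $H_{X\setminus\{p\}}(m)=|X|-1=|X\setminus\{p\}|$. The Hilbert function is nondecreasing and bounded by the number of points, so $H_{X\setminus\{p\}}(e)=|X|-1$ for all $e\ge m$, matching $\min\{H_X(e),|X|-1\}$. For $e=m$ this is exactly the hypothesis.

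The main step is $e<m$: we must show $H_{X\setminus\{p\}}(e)=H_X(e)$. Equivalently, for each $p$ there is no degree-$e$ form vanishing on $X\setminus\{p\}$ but not at $p$; that is, the degree-$e$ evaluation space $E_e\subseteq\K^X$ contains no vector supported only at $p$. The plan is a multiplication argument. Suppose $f$ has degree $e$, vanishes on $X\setminus\{p\}$ and satisfies $f(p)\neq0$. Choose a linear form $h$ with $h(p)\neq0$, which is possible since $\K$ is infinite. Then $h^{m-e}f$ has degree $m$, vanishes on $X\setminus\{p\}$ and is nonzero at $p$, contradicting $CB(m)$. So $E_e$ contains no indicator-type vector $\delta_p$, and therefore the restriction map $E_e\to\K^{X\setminus\{p\}}$ is injective. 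Hence $H_{X\setminus\{p\}}(e)=\dim E_e=H_X(e)$.

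This argument is simple enough that the only point needing care is that $CB(m)$ propagates downward via multiplication by a linear form nonvanishing at $p$. The Hilbert-function shape is used only to identify $\min\{H_X(e),|X|-1\}$ with $H_X(e)$ for $e\le m$, and to obtain the symmetry relation.
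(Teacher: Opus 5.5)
Your proposal is correct and follows essentially the paper's proof. Both arguments propagate $CB(m)$ down to degree $e$ by multiplying by $h^{m-e}$ with $h(p)\neq0$, use monotonicity together with the bound $|X|-1$ for $e\geq m$, and check the symmetry relation by direct arithmetic. The only difference is cosmetic: you spell out, via injectivity of the restriction $E_e\to\K^{X\setminus\{p\}}$, why $CB(e)$ gives $H_{X\setminus\{p\}}(e)=H_X(e)$, a step the paper leaves implicit.
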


\begin{proof}
Let $N=|X|=ma+2$.  The displayed $h$-vector gives
\[
  H_X(i)=1+ia\quad(0\leq i\leq m),
  \qquad H_X(m)=N-1,
  \qquad H_X(e)=N\quad(e\geq m+1).
\]
First let $0\leq e\leq m$.  If $F$ were a degree-$e$ form vanishing on $X\setminus\{p\}$ but not on $p$, choose a linear form $h$ with $h(p)\neq0$.  Then $Fh^{m-e}$ would be a degree-$m$ form vanishing on $X\setminus\{p\}$ but not on $p$, contradicting $CB(m)$.  Thus $X$ has $CB(e)$ for every $e\leq m$.

For $e\leq m$, the equality $H_{X\setminus\{p\}}(e)=H_X(e)$ follows from $CB(e)$, and here $H_X(e)\leq H_X(m)=N-1$.  For $e\geq m$, the $CB(m)$ hypothesis gives
\[
  H_{X\setminus\{p\}}(m)=H_X(m)=N-1.
\]
Since $X\setminus\{p\}$ has $N-1$ points and Hilbert functions are nondecreasing, $H_{X\setminus\{p\}}(e)=N-1$ for all $e\geq m$.  This proves the displayed deletion formula.

Finally, for $0\leq i\leq m$,
\[
  H_X(i)+H_X(m-i)=(1+ia)+(1+(m-i)a)=ma+2=N=|X|.
\]
\end{proof}

\begin{proof}[Justification of Remark~\ref{rem:ag-interpretation}]
The equivalence of \textup{(i)} and \textup{(ii)} is Theorem~\ref{thm:extremal-cb}.  Assume \textup{(ii)}.  The displayed $h$-vector $(1,a,\ldots,a,1)$, with $m$ copies of $a$, is symmetric and has last nonzero degree $s=m+1$.  Moreover, $CB(m)$ is precisely the Cayley--Bacharach condition in degree $s-1$ required by the criterion of Davis--Geramita--Orecchia \cite[Theorem~5]{DGO}, in its reduced zero-dimensional form, and by the equivalent modern formulations of Geramita--Kreuzer--Robbiano \cite{GKR} and Kreuzer--Long--Robbiano \cite{KLR}; Lemma~\ref{lem:cbp-to-dgo} records the deletion formula in our notation.  Hence $X$ is arithmetically Gorenstein with $h$-vector $(1,a,\ldots,a,1)$, with $m$ copies of $a$.

Conversely, if $X$ is arithmetically Gorenstein with this $h$-vector, then $s=m+1$ and the same criterion gives the Cayley--Bacharach property in degree $s-1=m$.  Since $H_X(m)=|X|-1$, condition \textup{(ii)} follows.
\end{proof}

\subsection{Hilbert-function stability}

We now treat near-equality.  Suppose $\nu_m(X)$ is a circuit and write
\[
  |X|=ma+2+s,
  \qquad s\geq0.
\]
Since $H_X(m)=|X|-1$, we have
\(
  H_X(m)=ma+1+s.
\)
Define
\[
  \delta_j=H_X(j)-(ja+1)
  \qquad(0\leq j\leq m).
\]

\begin{proof}[Proof of Theorem~\ref{thm:hilbert-stability}]
We have $\delta_0=0$ because $H_X(0)=1$, and $\delta_1=0$ because $X\subseteq\PP^a(\K)$ is nondegenerate, so $H_X(1)=a+1$.  Lemma~\ref{lem:evaluation-growth} gives
\[
  H_X(j+1)\geq H_X(j)+a
  \qquad(0\leq j<m).
\]
Therefore $\delta_{j+1}\geq\delta_j$.  Finally,
\[
  \delta_m=H_X(m)-(ma+1)=|X|-1-(ma+1)=s.
\]
This proves the monotonicity statement.

If $m=1$, then $s=\delta_m=\delta_1=0$.  Since $H_X(1)=|X|-1$, Lemma~\ref{lem:no-plateau} gives $H_X(2)=|X|$, and hence the displayed first-difference statement reduces to $\Delta H_X=(1,a,1)$.  We may therefore assume $m\geq2$ for the rest of this paragraph.

Put $\varepsilon_j=\delta_j-\delta_{j-1}$ for $2\leq j\leq m$.  Then $\varepsilon_j\geq0$ and
\(
  \sum_{j=2}^m\varepsilon_j=\delta_m-\delta_1=s.
\)
Moreover,
\[
  \Delta H_X(j)=H_X(j)-H_X(j-1)=a+\delta_j-\delta_{j-1}=a+\varepsilon_j
\]
for $2\leq j\leq m$, while $\Delta H_X(0)=1$ and $\Delta H_X(1)=a$.  Since $H_X(m)=|X|-1$, Lemma~\ref{lem:no-plateau} gives $H_X(m+1)=|X|$, and so $\Delta H_X(m+1)=1$.  This proves the displayed shape of the first differences.
\end{proof}

Thus, at the level of Hilbert functions, a near-extremal circuit is obtained from the extremal lower envelope by adding a total surplus of $s$ before degree $m$.

\subsection{Duality stability}

The circuit relation also gives a self-contained duality constraint.  Let
\(
  E_j\subseteq\K^X
\)
be the degree-$j$ evaluation space.  Since $\nu_m(X)$ is a circuit, there is a vector
\(
  \lambda=(\lambda_x)_{x\in X}\in(\K^\times)^X
\)
such that
\[
  E_m=\lambda^\perp
  =\left\{z\in\K^X:\sum_{x\in X}\lambda_x z_x=0\right\}.
\]
Define a perfect pairing on $\K^X$ by
\[
  \langle u,v\rangle_\lambda=
  \sum_{x\in X}\lambda_x u_x v_x.
\]

\begin{proof}[Proof of Theorem~\ref{thm:duality-stability}]
For $f\in E_i$ and $g\in E_{m-i}$, the coordinatewise product $fg$ belongs to $E_m$.  Since $E_m=\lambda^\perp$, we have
\(
  \langle f,g\rangle_\lambda=0.
\)
Thus
\(
  E_{m-i}\subseteq E_i^\perp.
\)
Because the pairing is perfect, $\dim E_i^\perp=|X|-\dim E_i=|X|-H_X(i)$.  Hence
\[
  H_X(m-i)=\dim E_{m-i}\leq |X|-H_X(i),
\]
which proves
\[
  H_X(i)+H_X(m-i)\leq |X|.
\]
Using
\(
  H_X(j)=ja+1+\delta_j
\)
and $|X|=ma+2+s$, this inequality is equivalent to
\(
  \delta_i+\delta_{m-i}\leq s.
\)
The quotient $D_i=E_i^\perp/E_{m-i}$ has dimension
\[
  \dim E_i^\perp-\dim E_{m-i}
  =|X|-H_X(i)-H_X(m-i)
  =s-\delta_i-\delta_{m-i}.
\]
Summing over $0\leq i\leq m$ gives the displayed bound.
\end{proof}

\begin{remark}[Canonical-module interpretation]
In the standard Cayley--Bacharach theory, the condition $CB(m)$ can be expressed through the canonical module of the homogeneous coordinate ring $R=\K[x_0,\ldots,x_a]/I_X$; see Geramita--Kreuzer--Robbiano \cite{GKR} and Kreuzer--Long--Robbiano \cite{KLR}.  Under this interpretation, the quotients $D_i$ above are the graded pieces of the cokernel of the corresponding canonical self-duality map, up to the conventional grading shift.  Thus Theorem~\ref{thm:duality-stability} says that a near-extremal Veronese circuit is almost self-dual, with total duality defect at most $(m+1)s$.
\end{remark}

\subsection{The first near-extremal case}

The first possible nonzero surplus is completely rigid at the level of Hilbert functions.

\begin{proof}[Proof of Corollary~\ref{cor:s-one}]
Here $s=1$, so Theorem~\ref{thm:hilbert-stability} gives
\[
  0=\delta_0=\delta_1\leq\delta_2\leq\cdots\leq\delta_m=1.
\]
Thus each $\delta_j$ is either $0$ or $1$, and there is a unique jump position $k$ such that $\delta_j=0$ for $j<k$ and $\delta_j=1$ for $j\geq k$.

The duality constraint from Theorem~\ref{thm:duality-stability} gives
\(
  \delta_i+\delta_{m-i}\leq1
\)
for every $i$.  If $2k\leq m$, then $i=k$ satisfies both $i\geq k$ and $m-i\geq k$, so
\(
  \delta_i+\delta_{m-i}=2,
\)
a contradiction.  Hence $2k>m$, i.e. $k>m/2$.
\end{proof}


\section*{Acknowledgements}

H.L. was supported by the National Natural Science Foundation of China (12501487), the China Scholarship Council, and IBS-R029-C4.  X.L. was supported by the Excellent Young Talents Program (Overseas) of the National Natural Science Foundation of China.

\section*{Declaration on the use of AI}

The authors used generative AI tools to assist in discussing proof strategies, checking proofs, and improving exposition. All mathematical arguments, results, and conclusions were reviewed and verified by the authors.


\begin{thebibliography}{10}

\bibitem{AlexanderHirschowitz}
J. Alexander and A. Hirschowitz.
\newblock Polynomial interpolation in several variables.
\newblock {\em J. Algebraic Geom.}, 4(2):201--222, 1995.

\bibitem{BernardiEtAl}
A. Bernardi, E. Carlini, M.~V. Catalisano, A. Gimigliano, and A. Oneto.
\newblock The Hitchhiker Guide to: Secant Varieties and Tensor Decomposition.
\newblock {\em Mathematics}, 6(12):Paper No. 314, 86 pp., 2018.

\bibitem{BialynickiBirulaSchinzel}
A. Bia{\l}ynicki-Birula and A. Schinzel.
\newblock Representations of multivariate polynomials by sums of univariate
  polynomials in linear forms.
\newblock {\em Colloq. Math.}, 112(2):201--233, 2008. Corrigendum: Colloq.
  Math. 125 (2011), no. 1, 139.

\bibitem{BukhProblems}
B. Bukh.
\newblock Interesting problems that I cannot solve.
\newblock \url{https://www.borisbukh.org/problems.html}. Accessed June 23,
  2026.

\bibitem{BukhBicliques}
B. Bukh.
\newblock Extremal graphs without exponentially small bicliques.
\newblock {\em Duke Math. J.}, 173(11):2039--2062, 2024.

\bibitem{DGO}
E.~D. Davis, A.~V. Geramita, and F. Orecchia.
\newblock Gorenstein algebras and the {C}ayley-{B}acharach theorem.
\newblock {\em Proc. Amer. Math. Soc.}, 93(4):593--597, 1985.

\bibitem{EGH}
D. Eisenbud, M. Green, and J. Harris.
\newblock Cayley-{B}acharach theorems and conjectures.
\newblock {\em Bull. Amer. Math. Soc. (N.S.)}, 33(3):295--324, 1996.

\bibitem{GKR}
A.~V. Geramita, M. Kreuzer, and L. Robbiano.
\newblock Cayley-{B}acharach schemes and their canonical modules.
\newblock {\em Trans. Amer. Math. Soc.}, 339(1):163--189, 1993.

\bibitem{IarrobinoKanev}
A. Iarrobino and V. Kanev.
\newblock {\em Power sums, {G}orenstein algebras, and determinantal loci},
  volume 1721 of {\em Lecture Notes in Math.}
\newblock Springer-Verlag, Berlin, 1999.

\bibitem{Kneser}
M. Kneser.
\newblock Absch{\"a}tzung der asymptotischen {D}ichte von {S}ummenmengen.
\newblock {\em Math. Z.}, 58:459--484, 1953.

\bibitem{KLR}
M. Kreuzer, L.~N. Long, and L. Robbiano.
\newblock On the {C}ayley-{B}acharach property.
\newblock {\em Comm. Algebra}, 47(1):328--354, 2019.

\bibitem{Landsberg}
J.~M. Landsberg.
\newblock {\em Tensors: geometry and applications}, volume 128 of {\em Grad.
  Stud. Math.}
\newblock American Mathematical Society, Providence, RI, 2012.

\bibitem{MirandolaZemor}
D. Mirandola and G. Z{\'e}mor.
\newblock Critical pairs for the product {S}ingleton bound.
\newblock {\em IEEE Trans. Inform. Theory}, 61(9):4928--4937, 2015.

\bibitem{Randriambololona}
H. Randriambololona.
\newblock On products and powers of linear codes under componentwise
  multiplication.
\newblock In S. Ballet, M. Perret, and A. Zaytsev, editors, {\em Algorithmic
  arithmetic, geometry, and coding theory}, volume 637 of {\em Contemp. Math.},
  pages 3--78. American Mathematical Society, Providence, RI, 2015.

\bibitem{ReznickEvenPowers}
B. Reznick.
\newblock Sums of even powers of real linear forms.
\newblock {\em Mem. Amer. Math. Soc.}, 96(463):viii+155, 1992.

\bibitem{ReznickPatterns}
B. Reznick.
\newblock Patterns of dependence among powers of polynomials.
\newblock In S. Basu and L. Gonzalez-Vega, editors, {\em Algorithmic and
  quantitative real algebraic geometry}, volume 60 of {\em DIMACS Ser.
  Discrete Math. Theoret. Comput. Sci.}, pages 101--121. American Mathematical
  Society, Providence, RI, 2003.

\bibitem{Sladek}
A. S{\l}adek.
\newblock Linear dependence of powers of linear forms.
\newblock {\em Ann. Math. Silesianae}, 29:131--138, 2015.

\end{thebibliography}
\end{document}